\DeclareMathSymbol{\rightrightarrows}  {\mathrel}{AMSa}{"13}
\def\sd{\operatorname{sd}}
\def\Ex{\operatorname{Ex}}
\def\varholim@#1#2{\mathop{\vtop{\ialign{##\crcr
 \hfil$#1\m@th\operator@font holim$\hfil\crcr
 \noalign{\nointerlineskip\kern\ex@}#2#1\crcr
 \noalign{\nointerlineskip\kern-\ex@}\crcr}}}}
\def\hocolim{\mathpalette\varholim@\rightarrowfill@} 
\def\hoinvlim{\mathpalette\varholim@\leftarrowfill@}
\newtheorem{theorem}{Theorem}
\newtheorem{lemma}[theorem]{Lemma}
\newtheorem{corollary}[theorem]{Corollary}
\newtheorem{proposition}[theorem]{Proposition}
\theoremstyle{definition}
\newtheorem{remark}[theorem]{Remark}
\begin{document}

\title{Persistent homotopy theory}

\author{J.F. Jardine\thanks{Supported by NSERC.}}

\affil{\small Department of Mathematics\\University of Western Ontario\\London, Ontario, Canada
}
\affil{jardine@uwo.ca}

\renewcommand{\thefootnote}{\fnsymbol{footnote}} 
\footnotetext{2020 Mathematics Subject Classification::\ Primary 55U10;\ Secondary 62R40, 68T09}
\renewcommand{\thefootnote}{\arabic{footnote}} 




\maketitle

\begin{abstract}
Vietoris-Rips and degree Rips complexes are represented as homotopy types by their underlying posets of simplices, and basic homotopy stability theorems are recast in these terms. These homotopy types are viewed as systems (or functors), which are defined on a parameter space. The category of systems of spaces admits a partial homotopy theory that is based on controlled equivalences, suitably defined, that are the output of homotopy stability results. 
  \end{abstract}

\section*{Introduction}

A prototypical homotopy stability result asserts that, if one adds points to a data set $X$ that are close in a suitable sense to form a new data set $Y$, then the corresponding inclusion $V_{\ast}(X) \to V_{\ast}(Y)$ of Vietoris-Rips systems is a strong deformation retract up to a bounded shift, where the bound depends linearly on how close the points of $Y$ are to points of $X$.

The language in this last paragraph is a bit colloquial, and it involves new terms that need to be explained. In particular, a system of spaces $X$ is a functor $s \mapsto X_{s}$ where $s$ is a member of the real parameter poset $[0,\infty)$ and each $X_{s}$ is a ``space'' or simplicial set, while a map of systems is a natural transformation of functors.

For a finite metric space $X$ (a data set), the Vietoris-Rips complexes $s \mapsto V_{s}(X)$ form such a system, and an inclusion of finite metric spaces $X \subset Y$ induces a transformation $V_{s}(X) \to V_{s}(Y)$ that is natural in the distance parameter $s$. Recall that $V_{s}(X)$ is the finite simplicial complex whose simplices are subsets $\sigma$ of $X$ such that the distance $d(x,y) \leq s$ for all $x,y \in \sigma$. 

The Vietoris-Rips complex $V_{s}(X)$ is defined as an abstract simplicial complex, and one usually makes it into a space by constructing its realization. An alternative is to put a total order on the vertices (which is consistent with listing the data set $X$), and then form an associated simplicial set as a subobject of a simplex that is determined by the order on $X$. This simplicial set also has a realization, which is homeomorphic to the realization of the abstract simplicial complex. Both routes lead to the same space, and hence represent the same homotopy type.
\medskip

There is a different approach. The basic method of this paper is to treat the poset $P_{s}(X)$ of simplices of $V_{s}(X)$ as a homotopy theoretic object in its own right by using the nerve $BP_{s}(X)$ of $P_{s}(X)$. The space $BP_{s}(X)$ is the barycentric subdivision of the Vietoris-Rips complex $V_{s}(X)$, and therefore has the same homotopy type.

This construction may seem fraught with complexity, but one can restrict to low dimensional simplices as necessary.
The advantage of the poset approach is that the nerves of the posets $P_{s}(X)$ can be employed to great theoretical effect, by using basic features of Quillen's theory of homotopy types of posets \cite{Q4}.

For example, suppose that $Y$ is a finite metric space, and that $X$ is a subset of $Y$. Suppose that $r \geq 0$ is a real parameter and that for each $y \in Y$ there is an $x \in X$ such that $d(x,y) < r$, where $d$ is the metric on $Y$. Then one constructs a retraction function $\theta: Y \to X$ by insisting that $\theta(y)$ is a point of $X$ such that $d(y,\theta(y)) < r$. It follows from the triangle identity shows the function $\theta$ induces a poset morphism $\theta: P_{s}(Y) \to P_{s+2r}(X)$, and there is a diagram of morphisms
\begin{equation}\label{eq 1}
\xymatrix{
P_{s}(X) \ar[r] \ar[d]_{i} & P_{s+2r}(X) \ar[d]^{i} \\
P_{s}(Y) \ar[r] \ar[ur]^{\theta} & P_{s+2r}(Y)
}
\end{equation}
in which the upper triangle commutes on the nose, and the bottom triangle commutes up to a natural transformation that fixes $P_{s}(X)$. The horizontal and vertical morphisms are the natural inclusions.

This construction translates directly to a proof of the Rips stability theorem after applying the nerve functor --- this is Theorem \ref{th 4} below.
There is a corresponding construction and result for the degree Rips filtration (Theorem \ref{th 6}), where one uses a more interesting distance criterion that involves configuration spaces.
We also present, in Theorem \ref{th 5}, a quick proof of the version of the Rips stability theorem given by Blumberg-Lesnick \cite{BlumLes} that uses only poset techniques.

These results are proved in Section 2. The basic terminology appears in Section 1, along with a relatively simple model for the fundmental groupoid of the space $BP_{s}(X)$. 
\medskip

The diagram
(\ref{eq 1}) is a ``homotopy interleaving'', and is a strong deformation retraction up to a shift --- in this case the shift is $2r$. Its existence  implies that there is a commutative diagram
\begin{equation*}
\xymatrix{
\pi_{n}(BP_{s}(X),x) \ar[r] \ar[d]_{i_{\ast}} & \pi_{n}(BP_{s+2r}(X),x) \ar[d]^{i_{\ast}} \\
\pi_{n}(BP_{s}(Y),x) \ar[r] \ar[ur]^{\theta} & \pi_{n}(BP_{s+2r}(Y),x)
}
\end{equation*}
of maps between homotopy groups for each choice of base point $x \in X$. There are similar induced diagrams in path components and in homology groups.

It follows that if $\alpha \in \pi_{n}(BP_{s}(X),x)$ maps to $0 \in \pi_{n}(BP_{s}(Y),x)$, then $\alpha$ maps to $0$ in $\pi_{n}(BP_{s+2r}(X),x)$, so that the vertical maps $i_{\ast}$ are $2r$-mono\-morphisms, suitably defined. Similarly, the maps $i_{\ast}$ are $2r$-epimorphisms, in that every $\beta \in  \pi_{n}(BP_{s}(Y),x)$ maps to an element of $\pi_{n}(BP_{s+2r}(Y),x)$ which is in the image of the homomorphism $i_{\ast}$. 

The maps $i_{\ast}: \pi_{n}(BP_{s}(X),x) \to \pi_{n}(BP(Y),x)$ are $2r$-isomorphisms in the sense that they are $2r$-mono\-mor\-phisms and a $2r$-epimorphisms. A similar observation holds for path components, and one says that the $2r$-interleaving produced by the Rips stability theorem is a $2r$-equivalence.

More generally, one defines families of $r$-equivalences of systems of spaces for all $r \geq 0$, and
a map $X \to Y$ of systems of spaces is a {\it controlled equivalence} if it is an $r$-equivalence for some $r \geq 0$.
\medskip

The third section of this paper is a general study of controlled equivalences of systems of spaces, along with their interactions with sectionwise fibrations and sectionwise cofibrations.

A map of systems $f: X \to Y$ is a sectionwise fibration if all of its constituent maps $f: X_{s} \to Y_{s}$ are fibrations of simplicial sets. Sectionwise cofibrations and sectionwise weak equivalences are defined analogously. 

Quillen's triangle axiom {\bf CM2} does not hold for the class of $r$-equivalences, but a modification is possible: Lemma \ref{lem 12} implies, for example, that if $f: X \to Y$ is an $s$-equivalence and $g: Y \to Z$ is an $r$-equivalence, then the composite $g \cdot f: X \to Z$ is an $(r+s)$-equivalence. 

It is shown, in a sequence of lemmas leading to Theorem \ref{th 15}, that maps $p: X \to Y$ which are both sectionwise fibrations and $r$-equivalences pull back to maps which are sectionwise fibrations and $2r$-equivalences. The doubling of the parameter from $r$ to $2r$ reflects the usual two obstructions in the argument for the corresponding classical result for simplicial sets.

It is tempting to think that Theorem \ref{th 15} has a dual formulation that holds for sectionwise cofibrations, but such a result has not been proved.

We still have partial glueing results. Perhaps most usefully, if there is a pushout diagram of systems
\begin{equation*}
  \xymatrix{
    A \ar[r] \ar[d]_{i} & C \ar[d]^{i_{\ast}} \\
    B \ar[r] & D
  }
  \end{equation*}
where $i$ is a sectionwise cofibration, then $i_{\ast}$ is a sectionwise cofibration, and the following statements hold:
\begin{itemize}
\item[1)] If the map $\pi_{0}A \to \pi_{0}B$ is an $r$-isomorphism then the map $i_{\ast}: \pi_{0}C \to \pi_{0}D$ is an $r$-isomorphism.
\item[2)] If the maps $H_{k}(A) \to H_{k}(B)$ are $r$-isomorphisms in homology (arbitrary coefficients) for $k \geq 0$, then the maps $H_{k}(C) \to H_{k}(D)$ are $2r$-isomorphisms.
\end{itemize}
These statements are proved in Lemma \ref{lem 18} of this paper --- the arguments are not difficult.

We also show, in Lemma \ref{lem 17}, that if $i: A \to B$ is an $r$-interleaving, or a strong deformation retraction up to shift $r$, then the same holds for the map $i_{\ast}: C \to D$.

This applies in particular to cofibrations that arise from stability theorems. Thus, if $i$ is a map $BP_{\ast}(X) \to BP_{\ast}(Y)$ that is associated to an inclusion $X \subset Y$ of finite metric spaces that satisfies $d_{H}(X,Y) < r$, then the map $i_{\ast}: C \to D$ is a strong deformation retraction up to shift $2r$.

\tableofcontents

\section{Posets}

A {\it data set} $X$ is a finite subset of a metric space $Z$. The collection of data sets in $Z$ with inclusions between them forms a poset, which is denoted by $D(Z)$.

Suppose that $s \geq 0$, and that $X$ is a data set in $Z$. Write $P_{s}(X)$ for the poset of all subsets $\sigma \subset X$ such that $d(x,y) \leq s$ for all $x,y \in \sigma$.

  The poset $P_{s}(X)$ is the poset of simplices of the Vietoris-Rips complex $V_{s}(X)$ of $X$. The members $\sigma \subset X$ of $P_{s}(X)$ are simplices of dimension $n-1$, where $n = \vert \sigma \vert$ is the number of elements of $\sigma$.
  \medskip
  
Each poset $P_{s}(X)$ is a finite category. Other examples of finite posets are given by the finite ordinal numbers
\begin{equation*}
  \mathbf{n} = \{0,1, \dots ,n\},
\end{equation*}
with the obvious ordering.

There is a functorial method of assigning a simplicial set $BC$ to a small category $C$, where the $n$-simplices of $BC$ are the functors $\alpha: \mathbf{n} \to C$, or strings of composable morphisms in $C$ of length $n$. The simplicial structure maps of $BC$ are defined by composition with the functors (poset maps) $\mathbf{m} \to \mathbf{n}$ between finite ordinal numbers. The simplicial set $BC$ is variously called the {\it nerve} or the {\it classifying space} of $C$.

A group $G$ is a category (groupoid) with one object, and $BG$ is a model for the classifying space of $G$.

The nerve construction can also be applied to the ordinal number posets $\mathbf{n}$, and there is a natural isomorphism
\begin{equation*}
  B\mathbf{n} \cong \Delta^{n},
  \end{equation*}
where $\Delta^{n}$ is the standard $n$-simplex in simplicial sets.

The nerve functor $C \mapsto BC$ also preserves products, so that there is an isomorphism
\begin{equation*}
  B(C \times \mathbf{1}) \cong BC \times \Delta^{1}.
\end{equation*}
The existence of this isomorphism implies that the nerve functor takes natural transformations to simplicial homotopies.

It is standard to identify natural transformations with homotopies in this form of categorical homotopy theory.
\medskip

We have poset inclusions
  \begin{equation*}
    \sigma: P_{s}(X) \subset P_{t}(X), \enskip s \leq t,
  \end{equation*}
  for the data set $X$.
  
  Observe that $P_{0}(X)$ is the discrete poset (category) whose objects are the elements of $X$, and that $P_{t}(X)$ is the poset $\mathcal{P}(X)$ of all subsets of $X$ for $t$ sufficiently large.

  There is an isomorphism of posets
  \begin{equation*}
    \mathcal{P}(X) \cong \mathbf{1}^{\times m},
  \end{equation*}
  where $\mathbf{1}$ is the poset $\{0,1\}$ and $m$ is the cardinality of the set $X$. The isomorphism sends a subset $A$ of $X$ to the $m$-tuple $(\epsilon_{x})_{x \in X}$, where
  \begin{equation*}
    \epsilon_{x} =
    \begin{cases}
      1 & \text{if $x \in A$, and} \\
      0 & \text{if $x \notin A$.}
    \end{cases}
  \end{equation*}
  It follows that there is an isomorphism of simplicial sets
  \begin{equation*}
    B\mathcal{P}(X) \cong (\Delta^{1})^{\times m}.
  \end{equation*}
In particular, the simplicial set (or space) $BP_{t}(X)$ is contractible if $t$ is sufficiently large.
\medskip

\medskip

The Vietoris-Rips complex $V_{s}(X)$ is a finite abstract simplicial complex, and $P_{s}(X)$ is its poset of simplices.

The realization $\vert V_{s}(X) \vert$ of $V_{s}(X)$ is constructed, as a space, by glueing affine simplices together along face relationships, and it is standard to identify the simpilicial complex $V_{s}(X)$ with its realization. 

The nerve $BP_{s}(X)$ of the poset $P_{s}(X)$ is a simplicial set whose realization is the barycentric subdivision $\sd(V_{s}(X))$ of $V_{s}(X)$. The subdivision $\sd(V_{s}(X))$ is naturally weakly equivalent to $V_{s}(X)$ \cite[III.4]{GJ}, \cite{J34}.

There is a non-canonical method of associating a simplicial set structure to $V_{s}(X)$ that arises from a total ordering, or listing
\begin{equation*}
\phi: \mathbf{N} \xrightarrow{\cong} X
\end{equation*}
of the elements of the data set $X$. In the presence of such a listing, the set $X$ has $N+1$ elements, and the simplicial set $V_{s}(X)$ is the subcomplex of the standard simplex $\Delta^{N}$ whose non-degenerate simplices are the members of the original abstract simplicial complex. In this case, the simplicial set $V_{s}(X)$ is oriented by the total ordering $\phi$ on $X$. Its realization, as a simplicial set, is homeomorphic to the realization of the underlying abstract simplicial complex, so that its homotopy type is independent of the ordering.

The method of this paper is to identify the poset $P_{s}(X)$ with the homotopy type $BP_{s}(X)$ directly, without either constructing a realization or assuming a particular list structure on $X$. This is consistent with the general identification of small categories with homotopy types, which was pioneered by Quillen \cite{Q3}, \cite{Q4} during the early development of algebraic $K$-theory.
\medskip

Suppose that $k$ is a non-negative integer. The poset $P_{s}(X)$ has a subobject $P_{s,k}(X) \subset P_{s}(X)$, which is the subposet of simplices $\sigma$ such that each element $x \in \sigma$ has at least $k$ distinct ``neighbours'' $y$ in $X$ (not necessarily in $\sigma$) such that $d(x,y) \leq s$.

The poset $P_{s,k}(X)$ is the poset of simplices of the degree Rips complex (or Lesnick complex) $L_{s,k}(X)$.
\medskip

For $s \leq t$ we have a diagram of poset inclusions
\begin{equation*}
\xymatrix{
P_{s}(X) \ar[r]^{\sigma} & P_{t}(X) \\
P_{s,k}(X) \ar[u] \ar[r]^{\sigma} & P_{t,k}(X) \ar[u] \\
P_{s.k+1}(X) \ar[r]_{\sigma} \ar[u] & P_{t.k+1}(X) \ar[u]
}
\end{equation*}
The notation $\sigma$ will always be used for poset inclusions associated to changes of distance parameter.

Observe also that
\begin{itemize}
\item[1)]
$P_{s,0}(X) = P_{s}(X)$ for all $s$, and 
\item[2)]
$P_{s,k}(X) = \emptyset$ for $k$ sufficiently large.
\end{itemize}
The objects $P_{\ast,k}(X)$ form the {\it degree Rips filtration} of the Vietoris-Rips system of posets $P_{\ast}(X)$.
\medskip

The simplicial set $BP_{s}(X)$ is model for $V_{s}(X)$ in the homotopy category, but it may seem intractably large since all simplices of $V_{s}(X)$ are vertices of $BP_{s}(X)$, while one can only practically recover the low dimensional part of the simplicial structure of $V_{s}(X)$ in concrete examples. That said, one only needs low dimensional simplices to compute low dimensional homotopy or homology groups of $BP_{s}(X)$. This is illustrated as follows.
\medskip

Suppose, generally, that the poset $P$ is a subobject of a power set $\mathcal{P}(X)$ is a subposet that is closed under taking non-empty subsets, so that $P$ defines an abstract simplicial complex.

Suppose given a list $x_{0}, \dots ,x_{k}$ of elements of $X$ such that $d(x_{i},x_{j}) \leq s$. This list may have repeats, and can be viewed as a function $x: \{0,1,\dots ,k\} \to X$ which may not be injective.
Write
\begin{equation*}
  [x_{0}, \dots ,x_{k}] = \{x_{0}\} \cup \dots \cup \{x_{k}\}.
  \end{equation*}
in $X$. This set can be identified with the image of the function $x$.

There is a graph $Gr(P)$ whose vertices are the singleton elements (vertices) $\{x\}$ of $P$, and there is an edge $x \to y$ if $[x,y]$ is an object of $P$.

Observe that there is an edge $[x,y]: x \to y$ if and only if there is an edge $[y,x]:y \to x$, and there is an edge $[x,x]: x \to x$.

Write $\Gamma(P)$ for the category generated by $Gr(P)$, subject to relations defined by the simplices $[x_{0},x_{1},x_{2}]$. Then we have the following:

\begin{proposition}\label{prop 1}
  The category $\Gamma(P)$ is a groupoid, and there are equivalences
\begin{equation*}
  \Gamma(P) \simeq G(P) \simeq \pi(BP),
\end{equation*}
where $\pi(BP)$ is the fundamental groupoid of the space $BP$, and $G(P)$ is the free groupoid on the poset $P$.
\end{proposition}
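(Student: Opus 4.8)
The plan is to prove the two displayed equivalences separately, treating $\Gamma(P)\simeq G(P)$ as the combinatorial heart of the matter and $G(P)\simeq\pi(BP)$ as an instance of a standard fact. First I would check that $\Gamma(P)$ is a groupoid. It suffices to see that each generating edge $[x,y]$ is invertible, since a category generated by invertible generators is automatically a groupoid. Because $\{x,y\}$ is an object of $P$, the degenerate strings $[x,y,x]$ and $[y,x,y]$ are admissible $2$-simplices, and the defining relations force $[x,y]\cdot[y,x]=[x,x]=\mathrm{id}$ and $[y,x]\cdot[x,y]=[y,y]=\mathrm{id}$ (writing composites in diagrammatic order, with $[x,y]\colon x\to y$). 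Thus $[y,x]$ is a two-sided inverse of $[x,y]$.

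For $G(P)\simeq\pi(BP)$ I would invoke the standard identification of the fundamental groupoid of the nerve of a small category with the free groupoid on that category, i.e. its localization at all morphisms. Concretely, the fundamental groupoid functor factors as $\pi=G\circ\tau_{1}$, where $\tau_{1}$ is the fundamental category functor, and $\tau_{1}BP\cong P$ because the nerve is fully faithful on categories; hence $\pi(BP)\cong G(P)$. This is classical and may be cited from \cite{GJ}.

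The core is $\Gamma(P)\simeq G(P)$, which I would prove by exhibiting mutually quasi-inverse functors. Define $F\colon\Gamma(P)\to G(P)$ to be the identity on vertex-objects and to send a generator $[x,y]$ to the composite of the inclusion $\{x\}\hookrightarrow\{x,y\}$ with the inverse (in $G(P)$) of $\{y\}\hookrightarrow\{x,y\}$. In the other direction I would fix a choice $c(\tau)\in\tau$ of a vertex of each simplex and define $R\colon G(P)\to\Gamma(P)$ on objects by $\tau\mapsto\{c(\tau)\}$; on a generating inclusion $\sigma\subseteq\tau$ set $R(\sigma\subseteq\tau)=[c(\sigma),c(\tau)]$, which is legitimate because $\{c(\sigma),c(\tau)\}\subseteq\tau$ lies in $P$. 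The single fact that makes everything work is the following: for each simplex $\tau\in P$ the faces of $\tau$ form a subposet with maximum $\tau$, so this subposet has a terminal object and its localization inside $G(P)$ is indiscrete. Consequently any zigzag of inclusions among subsets of a fixed simplex represents the \emph{unique} isomorphism between its endpoints. This is exactly what shows $F$ respects the $2$-simplex relations, shows $R$ is compatible with composition (the relation needed for $\sigma\subseteq\tau\subseteq\rho$ is precisely the $2$-simplex relation on $\{c(\sigma),c(\tau),c(\rho)\}\subseteq\rho$), and supplies both $R\circ F=\mathrm{id}_{\Gamma(P)}$ on the nose and a natural isomorphism $F\circ R\cong\mathrm{id}_{G(P)}$ whose component at $\tau$ is the inverse inclusion $\tau\to\{c(\tau)\}$.

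Essential surjectivity of $F$ is immediate: every object of $G(P)$ is a nonempty subset, hence contains a vertex whose inclusion becomes an isomorphism, so each simplex is isomorphic to one of its vertices. I expect the main obstacle to be the naturality verification for $F\circ R\cong\mathrm{id}$: one must confirm, for each inclusion $\sigma\subseteq\tau$, that two a priori different isomorphisms $\sigma\to\{c(\tau)\}$ in $G(P)$ coincide. Both are built from inclusions among subsets of the single simplex $\tau$, so the indiscreteness lemma above collapses the check to the assertion that these maps agree, but organizing the bookkeeping cleanly is the delicate point. Once this is in hand, $F$ is an equivalence and the chain $\Gamma(P)\simeq G(P)\simeq\pi(BP)$ is complete; philosophically the content is that the fundamental groupoid depends only on the $2$-skeleton, here recovered combinatorially from vertices, edges, and triangles of $P$.
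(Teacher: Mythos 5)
Your proposal is correct and takes essentially the same route as the paper: your $F$ and $R$ are exactly the paper's functors $\psi$ and $\phi$ (defined by the zigzag $\{x\}\hookrightarrow [x,y]\hookleftarrow \{y\}$ and by a choice of vertex $x_{\sigma}\in\sigma$, respectively), with the same composite identities ($R\circ F=\mathrm{id}_{\Gamma(P)}$ on the nose, $F\circ R\cong \mathrm{id}_{G(P)}$ via the inclusions $\{c(\tau)\}\subseteq\tau$), and the same appeal to \cite[III.2.1]{GJ} for $G(P)\simeq\pi(BP)$. The only differences are minor: your ``indiscreteness'' observation is a tidy repackaging of the paper's explicit commutative diagrams of inclusions inside a fixed simplex, and where you simply assert $[x,x]=\mathrm{id}$ you should, as the paper does, first invoke the simplices $[x,x,y]$ and $[x,y,y]$ to see that $[x,x]$ acts as a two-sided identity before using $[x,y,x]$ and $[y,x,y]$ to invert $[x,y]$.
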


The groupoid $G(P)$ can be identified up to natural equivalence with the fundamental groupoid $\pi (BP)$ by \cite[III.2.1]{GJ}. In more detail, $\pi BP$ is isomrphic to $G(P_{\ast}(BP))$ where $P_{\ast}(BP)$ is the path category of $BP$, and there is an isomorphism $P_{\ast}(BP) \cong P$, essentially by inspection (see also \cite{pathcat}).

\begin{corollary}
The category $\Gamma(P_{s}(X))$ is a groupoid, 
 and there are equivalences
\begin{equation*}
  \Gamma(P_{s}(X)) \simeq G(P_{s}(X)) \simeq \pi(BP_{s}(X)).
\end{equation*}
  \end{corollary}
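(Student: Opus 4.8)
The plan is to read the presentation of $\Gamma(P)$ as the classical edge-path presentation of the fundamental groupoid of a simplicial set, restricted to those vertices of $BP$ that are singletons, and to exploit the fact that the middle equivalence $G(P)\simeq\pi(BP)$ is already supplied by the cited identification of the free groupoid on a small category with the fundamental groupoid of its nerve. Thus the substance is to produce an equivalence $\Gamma(P)\simeq\pi(BP)$ and to check that $\Gamma(P)$ is a groupoid; the equivalence with $G(P)$ then follows by composition.

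First I would verify the groupoid property directly from the presentation. Using the convention that $[a,b]$ denotes the generating morphism $a\to b$ and that the $2$-simplex $[x_{0},x_{1},x_{2}]$ imposes $[x_{0},x_{2}]=[x_{1},x_{2}]\circ[x_{0},x_{1}]$, the degenerate triangles $[x,x,y]$ and $[x,y,y]$ show that $[x,x]$ acts as a right unit on generators out of $x$ and $[y,y]$ as a left unit on generators into $y$; together with the relation $[x,x]=\mathrm{id}_{x}$ coming from the degenerate $1$-simplex (degeneracies are identities in the edge-path presentation, as for any nerve), this identifies $[x,x]$ with $\mathrm{id}_{x}$. The triangles $[x,y,x]$ and $[y,x,y]$ then give $[y,x]\circ[x,y]=[x,x]=\mathrm{id}_{x}$ and $[x,y]\circ[y,x]=[y,y]=\mathrm{id}_{y}$, so each generator $[x,y]$ is invertible with inverse $[y,x]$. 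Since a composite of invertible morphisms is invertible, every morphism of $\Gamma(P)$ is invertible and $\Gamma(P)$ is a groupoid.

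Next I would construct the functor $\Phi\colon\Gamma(P)\to\pi(BP)$ sending $x\mapsto\{x\}$ and sending the generator $[x,y]$ to the zig-zag $\{x\}\hookrightarrow\{x,y\}\hookleftarrow\{y\}$, that is, to $\iota_{y}^{-1}\circ\iota_{x}$, where $\iota_{x},\iota_{y}$ are the invertible images in $\pi(BP)$ of the inclusions $\{x\}\subseteq\{x,y\}\supseteq\{y\}$; these make sense precisely because $P$ is closed under non-empty subsets, so that $\{x,y\}\in P$ whenever the edge $[x,y]$ is present. One checks that $\Phi$ respects the triangle relations, since inside the filled simplex $\{x_{0},x_{1},x_{2}\}$ all the inclusion-morphisms become compatible isomorphisms in $\pi(BP)$. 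To see that $\Phi$ is an equivalence I would factor it through the full subgroupoid $\pi(BP)\vert_{\mathrm{sing}}$ spanned by the singleton vertices: this inclusion is an equivalence because every object $\sigma$ of $P$ receives the inclusion $\{x\}\subseteq\sigma$ for any $x\in\sigma$, which is an isomorphism in $\pi(BP)$, so every object is isomorphic to a singleton.

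The main obstacle is the final identification $\Gamma(P)\xrightarrow{\ \cong\ }\pi(BP)\vert_{\mathrm{sing}}$, which amounts to matching presentations. Fullness requires a path-reduction argument: any morphism $\{x\}\to\{y\}$ in $\pi(BP)$ is, by the edge-path presentation, a word in the inclusion-maps and their inverses, and one must rewrite every such word as a product of the zig-zags $[v,w]$ by repeatedly inserting connections $\{v\}\subseteq\tau$ through the simplices $\tau$ that it traverses. Faithfulness is the genuinely delicate point: one must know that the only relations among edge-paths in $\pi(BP)$ are consequences of the $2$-simplex relations --- this is exactly the classical edge-path (van Kampen) presentation theorem for the fundamental groupoid of a simplicial set, applied to $BP$ --- and then check that these relations, transported to the singleton vertices, coincide with the triangle relations already imposed in $\Gamma(P)$, with the degenerate simplices accounting for the identities $[x,x]=\mathrm{id}_{x}$. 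Once this presentation-matching is in hand, $\Phi$ is fully faithful and essentially surjective, hence an equivalence, and composing with $\pi(BP)\simeq G(P)$ completes the proof.
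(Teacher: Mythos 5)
Your proposal is correct in outline, but it takes a genuinely different route from the paper's, and its hardest step is left as a plan rather than a proof. The groupoid property is argued the same way in both (degenerate triangles $[x,x,y]$, $[x,y,y]$ for units, $[x,y,x]$, $[y,x,y]$ for inverses); your explicit insistence that the degenerate edge $[x,x]$ be identified with $\mathrm{id}_{x}$, as for degeneracies in a nerve, is a point the paper glosses over and is in fact needed for the claim to be literally true. For the equivalences, the paper never compares $\Gamma(P)$ with $\pi(BP)$ directly. It proves $\Gamma(P) \simeq G(P)$ by exhibiting an explicit pair of functors: $\psi: \Gamma(P) \to G(P)$ sends a generator $[x,y]$ to the zig-zag $\{x\} \to [x,y] \leftarrow \{y\}$, while $\phi: G(P) \to \Gamma(P)$ is induced by choosing a vertex $x_{\sigma} \in \sigma$ for \emph{every} simplex $\sigma$ and sending $\sigma \subseteq \tau$ to $[x_{\sigma},x_{\tau}]$, which is well defined because $\{x_{\sigma},x_{\tau},x_{\gamma}\} \subseteq \gamma$ is again a simplex of $P$. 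The paper then checks $\phi \cdot \psi = 1_{\Gamma(P)}$ and $\psi \cdot \phi \cong 1_{G(P)}$ (the natural isomorphism given by the inclusions $\{x_{\sigma}\} \subseteq \sigma$), and quotes $G(P) \simeq \pi(BP)$ from \cite[III.2.1]{GJ}. You quote that same identification but then compare $\Gamma(P)$ with $\pi(BP)$ via the full subgroupoid on singleton vertices, with fullness by path-rewriting and faithfulness by the edge-path presentation theorem plus presentation matching. What the paper's route buys is that fullness and faithfulness never have to be checked at all: they are automatic once the explicit retraction $\phi \cdot \psi = 1$ is in hand.

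The caveat is your faithfulness step, which you rightly flag as the delicate point but do not carry out. It does complete, but completing it essentially reconstructs the paper's $\phi$: the ``transport of relations to the singleton vertices'' is conjugation by a chosen family of invertible inclusions $\{x_{\sigma}\} \subseteq \sigma$ (one for each object of $P$, not just those on a given path); the transported generator of $\sigma \subseteq \tau$ must be identified with the image of $[x_{\sigma},x_{\tau}]$; and the transported relation for a composable pair $\sigma \subseteq \tau \subseteq \gamma$ is exactly the triangle relation of the simplex $\{x_{\sigma},x_{\tau},x_{\gamma}\}$, which lies in $P$ by closure under non-empty subsets. This yields a functor from the presented groupoid back to $\Gamma(P)$ splitting your $\Phi$, whence faithfulness. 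So the missing core of your sketch is precisely the paper's key device --- the global choice $\sigma \mapsto x_{\sigma}$ and the resulting functor $G(P) \to \Gamma(P)$ --- and once it is supplied, your argument and the paper's coincide in substance, differing mainly in whether the comparison is packaged as an explicit equivalence of categories or as a fully-faithful, essentially surjective functor.
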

There is an equivalence of groupoids $\pi(BP_{s}(X)) \simeq \pi(V_{s}(X))$, since the spaces $BP_{s}(X)$ and $V_{s}(X)$ are weakly equivalent, and so the fundamental groupoid $\pi(V_{s}(X))$ is weakly equivalent to $\Gamma(P_{s}(X))$.

\begin{proof}[Proof of Proposition \ref{prop 1}]
  We show that
  \begin{itemize}
    \item[1)]
      The category $\Gamma(P)$ is a groupoid.
\item[2)]
      There is an equivalence of groupoids $\Gamma(P) \simeq G(P)$.
  \end{itemize}
  
For the first claim, the edges $[x,x]$ represent $2$-sided identities, on account of the existence of the simplices $[x,x,y]$ and $[x,y,y]$. Then the simplices $[x,y,x]$ and $[y,x,y]$ are used to show that each edge $[x,y]$ represents an invertible morphism of $\Gamma_{s}(X)$.

For the second claim, pick an element $x_{\sigma} \in \sigma$ for each simplex $\sigma \in P$.

For $\sigma \subset \tau$ in $P$, the list $x_{\sigma},x_{\tau}$ consists of elements of $\tau$, so that $[x_{\sigma},x_{\tau}] \subset \tau$ is a simplex of $P$. If $\sigma \subset \tau \subset \gamma$ are morphisms of $P$, then $[x_{\sigma},x_{\tau},x_{\gamma}] \subset \gamma$ is a simplex of $P$, and so there is a commutative diagram
\begin{equation*}
\xymatrix{
x_{\sigma} \ar[r]^{[x_{\sigma},x_{\tau}]} \ar[dr]_{[x_{\sigma},x_{\gamma}]} & x_{\tau} \ar[d]^{[x_{\tau},x_{\gamma}]} \\
& x_{\gamma}
}
\end{equation*}
in $\Gamma(P)$. It follows that sending the morphism  $\sigma \subset \tau$ to
$[x_{\sigma},x_{\tau}]: x_{\sigma} \to x_{\tau}$ defines a functor $P \to \Gamma(P)$, which induces a functor
\begin{equation*}
\phi: G(P) \to \Gamma(P).
\end{equation*}

Suppose that $[x,y]: x \to y$ is an edge of the graph $Gr(P)$. Then the associated inclusions 
\begin{equation*}
\{ x \} \to [x,y] \leftarrow \{ y \}
\end{equation*}
in $P$ define a morphism $[x,y]_{\ast}: \{ x \} \to \{ y \}$ of $G(P)$.

If $[x,y,z]$ is a simplex of $P$ then the diagram of inclusions
\begin{equation*}
\xymatrix{
&& \{ y \} \ar[dl] \ar[d] \ar[dr] \\
&[x,y] \ar[r] & [x,y,z] & [y,z] \ar[l] \\
\{ x \} \ar[ur] \ar[urr] \ar[rr] && [x,z] \ar[u] && \{ z \}
\ar[ll] \ar[ull] \ar[ul]
}
\end{equation*} 
is used to show that $[y,z]_{\ast} \cdot [x,y]_{\ast} = [x,z]_{\ast}$ in the groupoid $G(P)$. 

It follows that the assignment that takes an edge $x \to y$ of $Gr(P)$ to the morphism $[x,y]_{\ast}: \{ x \} \to \{ y \}$ defines a functor
\begin{equation*}
\psi: \Gamma(P) \to G(P).
\end{equation*}

The inclusions $\{ x_{\sigma} \} \subset \sigma$ define a natural isomorphism 
\begin{equation*}
\psi \cdot \phi \xrightarrow{\cong} 1_{G(P)},
\end{equation*}
and
the composite $\phi \cdot \psi$ is the identity on $\Gamma(P)$.
\end{proof}

\section{Stability}

Suppose that $Z$ is a metric space, and let $D(Z)$ be the poset of finite subsets (data sets) in $Z$.

The poset $D(Z)$ has the {\it Hausdorff metric} $d_{H}$, which can be described heuristically, relative to a fixed $r \geq 0$, as follows:
  \begin{itemize}
\item[1)] Suppose that $X \subset Y$ in $D(Z)$. Then $d_{H}(X,Y) < r$ if for all $y \in Y$ there is an $x \in X$ such that $d(y,x) < r$.
\item[2)]
For arbitrary  $X,Y \in D(Z)$:\enskip  $d_{H}(X,Y) < r$ if and only if (equivalently)
\begin{itemize}
\item[a)]
  $d_{H}(X,X\cup Y) < r$ and $d_{H}(Y,X\cup Y) < r$.
\item[b)]
for all $x \in X$ there is a $y \in Y$ such that $d(x,y) < r$, and
for all $y \in Y$ there is an $x \in X$ such that $d(y,x) < r$.
\end{itemize}
\end{itemize}

  We also have the following:

  \begin{lemma}\label{lem 3}
Suppose that $X$ and $Y$ are data sets in a metric space $Z$, and suppose that $d_{H}(X \cap Y,X) < r$.Then $d_{H}(Y,X \cup Y) < r$.
  \end{lemma}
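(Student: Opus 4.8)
The plan is to unwind the definition of the Hausdorff metric entirely and verify the conclusion by a direct two-case check; no clever construction is needed. The key observation is that both of the distances appearing in the statement are distances between a subset and a set containing it, so clause (1) of the definition applies directly in each case and we never have to invoke the more elaborate clause (2).

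First I would record what the hypothesis and conclusion literally say. Since $X \cap Y \subset X$, clause (1) tells us that the hypothesis $d_{H}(X \cap Y, X) < r$ means: for every $x \in X$ there is a point $z \in X \cap Y$ with $d(x,z) < r$. Likewise, since $Y \subset X \cup Y$, the conclusion $d_{H}(Y, X \cup Y) < r$ is, by clause (1), exactly the assertion that for every $w \in X \cup Y$ there is a point $y \in Y$ with $d(w,y) < r$. So the whole lemma reduces to producing, for an arbitrary $w \in X \cup Y$, a point of $Y$ within distance $< r$ of $w$.

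Now I would split on where $w$ lies. If $w \in Y$, then $w$ itself is the required point, since $d(w,w) = 0 < r$. If instead $w \in X$, then the hypothesis furnishes a point $z \in X \cap Y$ with $d(w,z) < r$; because $X \cap Y \subset Y$, this $z$ already lies in $Y$, and it is the point we need. Every $w \in X \cup Y$ falls into one of these two cases, which completes the verification.

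I do not expect any genuine obstacle here; the argument is a routine manipulation of the defining quantifiers. The only point requiring a moment's care is keeping straight which inclusion each invocation of clause (1) refers to --- the hypothesis uses $X \cap Y \subset X$ while the conclusion uses $Y \subset X \cup Y$ --- together with the elementary but essential remark that $X \cap Y \subset Y$, which is what lets the witness supplied by the hypothesis serve as a witness for the conclusion. (If one is fastidious about the degenerate case $r = 0$, note that the hypothesis forces every point of $X$ to have a point of $X \cap Y$ strictly nearer than $r$, so $r = 0$ can only occur when $X$ is empty, in which case the conclusion is vacuous.)
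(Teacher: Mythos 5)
Your proof is correct, and it is essentially the argument the paper intends: the paper gives no details beyond asserting that the lemma ``is easily proved'' and drawing the square of labelled inclusions, and your two-case check (a point $w \in Y$ is its own witness; a point $w \in X$ borrows its witness $z \in X \cap Y \subset Y$ from the hypothesis) is exactly that easy proof, using clause (1) of the Hausdorff-metric description on the inclusions $X \cap Y \subset X$ and $Y \subset X \cup Y$. The only wobble is your parenthetical on $r=0$: there the hypothesis holds vacuously when $X = \emptyset$, but if $Y \neq \emptyset$ the conclusion $d_{H}(Y, X \cup Y) < 0$ is not vacuous --- it is false --- so the right reading is simply that $r > 0$ throughout (as in all the paper's uses), which your main argument tacitly assumes when it invokes $d(w,w) = 0 < r$.
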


  Lemma \ref{lem 3} is easily proved. The statement can be visualized by the following diagram of labelled inclusions:
  \begin{equation*}
    \xymatrix{
      X \cap Y \ar[r] \ar[d]_{r} & Y \ar[d]^{r} \\
      X \ar[r] & X \cup Y
    }
  \end{equation*}
 \medskip 

Now suppose that $X \subset Y$ are data sets in $Z$, and supppose that $d_{H}(X,Y) < r$. Construct a function $\theta: Y \to X$ such that
  \begin{equation*}
    \theta(y) =
    \begin{cases}
      y & \text{if $y \in X$} \\
      x_{y} & \text{where  $x_{y} \in X$ with $d(y,x_{y}) < r$, if $y \notin X$}.
    \end{cases}
  \end{equation*}
  The function $\theta$ is a retraction and is not an inclusion. We are interested in the images $\theta(\tau)$ of subsets $\tau$ of $Y$.
  \medskip

  If $\tau \in P_{s}(Y)$ then $\theta(\tau) \in P_{s+2r}(X)$ by the triangle identity, and the assignment $\tau \mapsto \theta(\tau)$ respects inclusions of finite sets $\tau$.

  It follows that we have a diagram of poset morphisms
\begin{equation*}
  \xymatrix{
    P_{s}(X) \ar[r]^-{\sigma} \ar[d]_{i}  & P_{s+2r}(X) \ar[d]^{i} \\
    P_{s}(Y) \ar[r]_-{\sigma} \ar[ur]^(.4){\theta} & P_{s+2r}(Y)
  }
\end{equation*}
such that upper triangle commutes, and lower triangle commutes up to homotopy, in the sense that there are inclusions
\begin{equation}\label{eq 2}
  \sigma(\tau) \to \sigma(\tau) \cup i(\theta(\tau)) \leftarrow i(\theta(\tau)),
\end{equation}
which are natural in $\tau \in P_{s}(Y)$. The inclusions of (\ref{eq 2}) are identities for $\tau \in P_{s}(X)$.

For $\tau \in P_{s}(Y)$ the subset $\sigma(\tau) \cup i(\theta(\tau))$ is in $P_{s+2r}(Y)$. The inclusions of (\ref{eq 2}) define natural transformations, and hence homotopies
\begin{equation*}
  BP_{s}(Y) \times \Delta^{1} \to BP_{s+2r}(Y)
\end{equation*}
of simplicial set maps $BP_{s}(Y) \to BP_{s+2r}(Y)$.

We have proved the following:

  \begin{theorem}\label{th 4}
    Suppose $X \subset Y$ in $D(Z)$ such that $d_{H}(X,Y) < r$. Then there is a homotopy commutative diagram of poset morphisms
    \begin{equation}\label{eq 3}
  \xymatrix{
    P_{s}(X) \ar[r]^-{\sigma} \ar[d]_{i}  & P_{s+2r}(X) \ar[d]^{i} \\
    P_{s}(Y) \ar[r]_-{\sigma} \ar[ur]^(.4){\theta} & P_{s+2r}(Y)
  }
    \end{equation}
    in which the upper triangle commutes, and the lower triangle commutes up to a homotopy that fixes the subobject $P_{s}(X)$.
  \end{theorem}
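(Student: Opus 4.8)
The plan is to take the explicit retraction $\theta: Y \to X$ already at hand and verify that it assembles into the asserted homotopy-commutative square, so that the theorem becomes a formal record of the preceding construction. First I would confirm that $\theta$ is well defined on vertices: the hypothesis $d_{H}(X,Y) < r$ supplies, for each $y \in Y$ with $y \notin X$, at least one $x_{y} \in X$ satisfying $d(y,x_{y}) < r$, and choosing one such value makes $\theta$ a genuine set-theoretic retraction with $\theta|_{X} = \mathrm{id}$.

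Next I would promote $\theta$ to a poset morphism $P_{s}(Y) \to P_{s+2r}(X)$. Since $\theta$ carries subsets to subsets and respects inclusions, the only thing to check is that $\theta(\tau) \in P_{s+2r}(X)$ whenever $\tau \in P_{s}(Y)$, which is the one-line triangle estimate $d(\theta(y),\theta(y')) \le d(\theta(y),y) + d(y,y') + d(y',\theta(y')) < r + s + r$ for $y,y' \in \tau$. The same observation handles the upper triangle on the nose: because $\theta$ fixes every element of $X$, we have $\theta \circ i = \sigma$ as maps out of $P_{s}(X)$.

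The substance is the lower triangle, where I would exhibit a comparison between $i \circ \theta$ and $\sigma$ as functors $P_{s}(Y) \to P_{s+2r}(Y)$, which by the nerve formalism of Section~1 (natural transformations become homotopies) yields the required homotopy. The device is to route the comparison through the union functor $\tau \mapsto \sigma(\tau) \cup i(\theta(\tau))$. I would check that this union lands in $P_{s+2r}(Y)$ by a further triangle estimate, the mixed case $a \in \tau$, $b = \theta(y)$ with $y \in \tau$ giving $d(a,b) \le d(a,y) + d(y,\theta(y)) < s + r$; the evident inclusions $\sigma(\tau) \hookrightarrow \sigma(\tau) \cup i(\theta(\tau)) \hookleftarrow i(\theta(\tau))$ are then natural in $\tau$, so they form a zigzag of natural transformations and hence a homotopy between $B\sigma$ and $B(i\theta)$.

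Finally, to see that this homotopy fixes the subobject $P_{s}(X)$, I would note that for $\tau \in P_{s}(X)$ one has $\theta(\tau) = \tau = \sigma(\tau)$, so the union collapses to $\tau$ and both comparison inclusions become identities; the homotopy is therefore constant on $P_{s}(X)$. The only care needed is bookkeeping the three cases of the triangle inequality and confirming that the union construction is natural in $\tau$; I do not expect a genuine obstacle here, since every estimate is uniform in $\tau$ and the entire argument is a formal consequence of the constructions set up before the statement.
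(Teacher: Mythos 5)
Your proposal is correct and follows essentially the same route as the paper's own argument: the retraction $\theta$, the observation that $\theta(\tau)\in P_{s+2r}(X)$ by the triangle inequality, the zigzag of natural inclusions $\sigma(\tau)\to\sigma(\tau)\cup i(\theta(\tau))\leftarrow i(\theta(\tau))$ landing in $P_{s+2r}(Y)$, and the fact that these inclusions are identities on $P_{s}(X)$, so the induced homotopy is constant there. The only difference is cosmetic: you spell out the three triangle-inequality cases that the paper compresses into the phrase ``by the triangle identity.''
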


  The diagram (\ref{eq 3}) in the statement of Theorem \ref{th 4} is a {\it homotopy interleaving}. Theorem \ref{th 4} is a form of the Rips stability theorem. The form of this result that appears in the Blumberg-Lesnick paper \cite{BlumLes} is the following:

  \begin{theorem}\label{th 5}
 Suppose given $X, Y \subset Z$ are data sets with $d_{H}(X,Y) < r$.
\smallskip

Then there are maps $\phi: P_{s}(X) \to P_{s+2r}(Y)$ and $\psi: P_{s}(Y) \to P_{s+2r}(X)$ such that
\begin{equation*}
\begin{aligned}
&\psi \cdot \phi \simeq \sigma : P_{s}(X) \to P_{s+4r}(X)\enskip \text{and} \\
&\phi \cdot \psi \simeq \sigma: P_{s}(Y) \to P_{s+4r}(Y).
\end{aligned}
\end{equation*}
  \end{theorem}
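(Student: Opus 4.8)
The plan is to mimic the construction that precedes Theorem \ref{th 4}, but carried out symmetrically in both directions. Since $d_{H}(X,Y) < r$, for each $x \in X$ I can choose a point $\phi_{0}(x) \in Y$ with $d(x,\phi_{0}(x)) < r$, and for each $y \in Y$ a point $\psi_{0}(y) \in X$ with $d(y,\psi_{0}(y)) < r$. These are merely functions on the underlying vertex sets; no compatibility between them is required, which is what makes the argument work for data sets $X,Y$ that need not be nested.

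Next I would promote these vertex functions to poset morphisms with a shift of $2r$, exactly as for $\theta$ above. If $\tau \in P_{s}(X)$ and $x,x' \in \tau$, then the triangle inequality gives $d(\phi_{0}(x),\phi_{0}(x')) \le d(\phi_{0}(x),x) + d(x,x') + d(x',\phi_{0}(x')) < s + 2r$, so the image $\phi_{0}(\tau)$ lies in $P_{s+2r}(Y)$. Since taking images respects inclusions of finite sets, this defines $\phi: P_{s}(X) \to P_{s+2r}(Y)$, and symmetrically $\psi: P_{s}(Y) \to P_{s+2r}(X)$.

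The heart of the argument is to identify the composite $\psi \cdot \phi$ with the shift map $\sigma$ up to a natural transformation, which by the principle recorded in Section 1 (the nerve carries natural transformations to simplicial homotopies) is the same as a homotopy. For $\tau \in P_{s}(X)$ I would form the union $\tau \cup \psi(\phi(\tau))$ and check it lies in $P_{s+4r}(X)$: points of $\tau$ are within $s$ of each other; a point $x' \in \tau$ and a point $\psi_{0}\phi_{0}(x)$ satisfy $d(x',\psi_{0}\phi_{0}(x)) \le d(x',x) + d(x,\phi_{0}(x)) + d(\phi_{0}(x),\psi_{0}\phi_{0}(x)) < s + 2r$; and two points $\psi_{0}\phi_{0}(x),\psi_{0}\phi_{0}(x')$ are within $d(\phi_{0}(x),\phi_{0}(x')) + 2r < s + 4r$. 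Thus $\tau \mapsto \tau \cup \psi(\phi(\tau))$ is a poset map $P_{s}(X) \to P_{s+4r}(X)$, and the inclusions
\[
\sigma(\tau) \to \sigma(\tau) \cup \psi(\phi(\tau)) \leftarrow \psi(\phi(\tau))
\]
are natural in $\tau$. Applying $B$ turns this zigzag of natural transformations into homotopies, and since homotopy is an equivalence relation this yields $\psi \cdot \phi \simeq \sigma$. The identity $\phi \cdot \psi \simeq \sigma$ follows by the identical computation with the roles of $X$ and $Y$ interchanged.

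The only place demanding care — and the step I expect to be the main obstacle — is the parameter bookkeeping in the ``both in the image'' case above, where two applications of the chosen functions each contribute a shift, and one must confirm that the worst case still closes up at $s+4r$ rather than overflowing. Everything else is a routine triangle-inequality verification together with the naturality of the union inclusions, which is immediate since enlarging $\tau$ enlarges $\tau$ and $\psi(\phi(\tau))$ compatibly. (Alternatively, one could deduce the result from Theorem \ref{th 4} applied to the two inclusions $X \subset X \cup Y$ and $Y \subset X \cup Y$, but the direct symmetric construction above keeps the bookkeeping transparent.)
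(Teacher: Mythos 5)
Your proposal is correct, but it takes a genuinely different route from the paper's own proof. The paper follows the Blumberg--Lesnick outline: it forms the correspondence poset $U=\{(x,y)\mid x\in X,\ y\in Y,\ d(x,y)<r\}$, considers the two posets $P_{s,X}(U)$ and $P_{s,Y}(U)$ obtained by constraining distances in the $X$- or $Y$-coordinate, shows the projections $p_{X}$ and $p_{Y}$ onto $P_{s}(X)$ and $P_{s}(Y)$ are weak equivalences via Quillen's Theorem A (the slice categories are power sets, hence contractible), and then obtains $\phi$ and $\psi$ from the inclusions $P_{s,X}(U)\subset P_{s+2r,Y}(U)$ and $P_{s,Y}(U)\subset P_{s+2r,X}(U)$ by inverting those weak equivalences. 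Your argument instead symmetrizes the construction used for Theorem \ref{th 4}: choose nearest-point functions $\phi_{0},\psi_{0}$, promote them to poset maps with shift $2r$ by the triangle inequality, and exhibit the composites as homotopic to $\sigma$ through the natural zigzag $\sigma(\tau)\to\sigma(\tau)\cup\psi(\phi(\tau))\leftarrow\psi(\phi(\tau))$, whose distance bookkeeping ($s+4r$ in the worst case) you verify correctly. The trade-off: your construction produces honest poset morphisms $\phi,\psi$ and explicit natural-transformation homotopies, at the cost of arbitrary (non-canonical) choices of nearest points, whereas the paper's correspondence is choice-free and showcases the Theorem A technique it attributes to Memoli, but its $\phi$ and $\psi$ exist only after inverting the weak equivalences $p_{X},p_{Y}$, i.e.\ in the homotopy category. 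One small caution: your appeal to "homotopy is an equivalence relation" is not literally valid for maps of simplicial sets with non-fibrant target, but it is harmless here, since the zigzag of natural transformations is exactly what the paper itself means by $\simeq$ (as in the statement and proof of Theorem \ref{th 4}), and it becomes a genuine homotopy after realization or in the homotopy category.
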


  Theorem \ref{th 5} is a consequence of Theorem \ref{th 4}, but it also has a poset-theoretic proof, given below, that follows the outline given by Blumberg-Lesnick \cite{BlumLes}, and uses Quillen's Theorem A \cite{Q3}, \cite[IV.5.6]{GJ}. The use of Theorem A for proofs of stability results was introduced by Memoli \cite{Memoli0}.  

  \begin{proof}[Proof of Theorem \ref{th 5}]
 Set
  \begin{equation*}
    U=\{ (x,y)\ \vert\ x \in X,\ y \in Y,\ d(x,y) < r\ \}.
  \end{equation*}
The poset $P_{s,X}(U) \subset \mathcal{P}(U)$ consists of all subsets $\sigma \subset U$ such that $d(x,x') \leq s$ for all $(x,y),(x',y') \in \sigma$. Define the poset $P_{s,Y}(U)$ similarly, by constraining distances between coordinates in $Y$.

Projection on the $X$-factor defines a poset map $p_{X}: P_{s,X}(U) \to P_{s}(X)$, and projection on the $Y$-factor defines $p_{Y}: P_{s,Y}(U) \to P_{s}(Y)$. The maps $p_{X}$ and $p_{Y}$ are weak equivalences, by Quillen's Theorem A. In effect, the slice category $p_{X}/\sigma$ can be identified with the power set of the collection of all pairs $(x,y)$ such that $x \in \sigma$, and power sets are contractible posets.

There are inclusions
\begin{equation*}
  P_{s,X}(U) \subset P_{s+2r,Y}(U),\enskip P_{s,Y}(U) \subset P_{s+2r,X}(U),
\end{equation*}
by the triangle identity, and these maps define the maps $\phi$ and $\psi$, respectvely, via the weak equivalences $p_{X}$ and $p_{Y}$.
\end{proof}

  Suppose that $X$ is a finite subset of a metric space $Z$. Write $X^{k+1}_{dis}$ for the set of $k+1$ distinct points of $X$, and think of it as a subobject of $Z^{k+1}$. The product $Z^{k+1}$ has a product metric space structure, and so we have a Hausdorff metric on its poset $D(Z^{k+1})$ of finite subsets.

  We have the following analogue (and generalization) of Theorem \ref{th 4}:

  \begin{theorem}\label{th 6}
    Suppose $X \subset Y$ in $D(Z)$ such that $d_{H}(X_{dis}^{k+1},Y_{dis}^{k+1}) < r$. Then there is a homotopy commutative diagram of poset diagrams
    \begin{equation*}
  \xymatrix{
    P_{s,k}(X) \ar[r]^-{\sigma} \ar[d]_{i}  & P_{s+2r,k}(X) \ar[d]^{i} \\
    P_{s,k}(Y) \ar[r]_-{\sigma} \ar[ur]^(.4){\theta} & P_{s+2r,k}(Y)
  }
    \end{equation*}
    in which the upper triangle commutes, and the lower triangle commutes up to a homotopy which fixes the image of $P_{s,k}(X)$.
      \end{theorem}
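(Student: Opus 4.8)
The plan is to reuse verbatim the retraction $\theta \colon Y \to X$ of Theorem \ref{th 4}, which requires only that $d(y,\theta(y)) < r$ and $\theta(x)=x$ for $x \in X$, and then to add the single new ingredient: a verification that $\theta$ respects the degree filtration. Everything formal --- the commuting upper triangle, the naturality in $s$, and the homotopy witnessing the lower triangle --- will be imported from the proof of Theorem \ref{th 4} once the two containments
\begin{equation*}
\theta(\tau) \in P_{s+2r,k}(X) \quad\text{and}\quad \sigma(\tau)\cup i(\theta(\tau)) \in P_{s+2r,k}(Y)
\end{equation*}
are established for each $\tau \in P_{s,k}(Y)$. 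The diameter estimates are exactly those of Theorem \ref{th 4}, so the real content is the control of neighbour counts, and this is where the hypothesis on $X_{dis}^{k+1}$ and $Y_{dis}^{k+1}$ enters.

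First I would show that $\theta$ sends $P_{s,k}(Y)$ into $P_{s+2r,k}(X)$. Fix $\tau \in P_{s,k}(Y)$ and $y \in \tau$; by definition $y$ has $k$ distinct neighbours $w_{1},\dots,w_{k} \in Y$ with $w_{i} \neq y$ and $d(y,w_{i}) \leq s$, so that $(y,w_{1},\dots,w_{k})$ is a point of $Y_{dis}^{k+1}$. Since $X \subseteq Y$ gives $X_{dis}^{k+1} \subseteq Y_{dis}^{k+1}$, the (asymmetric) hypothesis $d_{H}(X_{dis}^{k+1},Y_{dis}^{k+1}) < r$ applies, and reading off that the product metric controls each coordinate produces a distinct tuple $(x_{0},x_{1},\dots,x_{k}) \in X_{dis}^{k+1}$ with $d(y,x_{0}) < r$ and $d(w_{i},x_{i}) < r$. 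Writing $x = \theta(y)$ and using the triangle inequality gives $d(x,x_{i}) < s+2r$ for $1 \leq i \leq k$ and $d(x,x_{0}) < 2r \leq s+2r$. Thus $x_{0},\dots,x_{k}$ are $k+1$ distinct points of $X$ lying within $s+2r$ of $x$; since at most one of them can equal $x$, at least $k$ of them are genuine neighbours, and hence $\theta(y)=x$ has the required degree. As $y \in \tau$ was arbitrary, $\theta(\tau) \in P_{s+2r,k}(X)$.

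The pleasant point I would emphasise is that this argument is uniform in $s$ for the single fixed function $\theta$: the witnessing neighbours $x_{0},\dots,x_{k}$ are allowed to depend on $s$, but $\theta(y)$ does not, so no threshold or refined choice of $\theta$ is needed. With the map in hand I would then check that the homotopy term $\sigma(\tau)\cup i(\theta(\tau))$ lies in $P_{s+2r,k}(Y)$: its diameter is bounded by $s+2r$ exactly as in Theorem \ref{th 4}, while each of its points is either a point of $\tau$, with $\geq k$ neighbours in $Y$ within $s \leq s+2r$, or a point of $\theta(\tau) \subseteq X \subseteq Y$, with $\geq k$ neighbours in $X \subseteq Y$ within $s+2r$ by the previous paragraph. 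The inclusions (\ref{eq 2}) are therefore natural transformations into $P_{s+2r,k}(Y)$ which restrict to identities on $P_{s,k}(X)$, and applying the nerve functor yields the homotopy fixing the image of $P_{s,k}(X)$.

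The main obstacle is the neighbour bookkeeping just described: unlike the plain Rips case, the degree condition is a statement about the ambient set and must be transported from $Y$ to $X$, and the only mechanism for doing so is the configuration-space hypothesis $d_{H}(X_{dis}^{k+1},Y_{dis}^{k+1}) < r$. The two subtleties to get right are, first, extracting the coordinatewise bounds $d(w_{i},x_{i})<r$ from the single product-metric inequality, and second, the counting step that discards at most one coincidence $x_{i} = \theta(y)$ and still guarantees $k$ honest neighbours. Neither is deep, but both are essential, and it is precisely the passage to \emph{distinct} $(k+1)$-tuples that forces the hypothesis to be phrased on $X_{dis}^{k+1}$ rather than on $X$ itself.
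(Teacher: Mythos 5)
Your proof is correct, and its core is the same as the paper's: apply the hypothesis $d_{H}(X_{dis}^{k+1},Y_{dis}^{k+1})<r$ to the tuple $(y,y_{1},\dots,y_{k})$ formed by a vertex $y$ of $P_{s,k}(Y)$ together with $k$ of its neighbours, extract a distinct tuple $(x_{0},\dots,x_{k})$ in $X$ with coordinatewise bounds $<r$, and finish by the method of Theorem \ref{th 4}. The one substantive difference is the choice of $\theta$: the paper \emph{defines} $\theta(y):=x_{0}$, so the degree condition for $\theta(y)$ holds by construction with $x_{1},\dots,x_{k}$ as the witnessing neighbours, whereas you keep an arbitrary Theorem \ref{th 4}--style retraction ($\theta(y)$ is any point of $X$ with $d(y,\theta(y))<r$) and recover the degree condition by the counting argument that at most one of the $k+1$ distinct points $x_{0},\dots,x_{k}$, all within $s+2r$ of $\theta(y)$, can coincide with $\theta(y)$. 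Your version proves something marginally stronger --- any such retraction induces the interleaving, not just the tailored one --- at the cost of that counting step and of one point you leave implicit: the retraction $\theta\colon Y\to X$ of Theorem \ref{th 4} exists only under $d_{H}(X,Y)<r$, which is not a stated hypothesis of Theorem \ref{th 6}. This is harmless --- it follows from $d_{H}(X_{dis}^{k+1},Y_{dis}^{k+1})<r$ by iterating Lemma \ref{lem 8} when $Y_{dis}^{k+1}\neq\emptyset$ (and the statement is vacuous otherwise, since then $P_{s,k}(Y)=\emptyset$), or one can simply observe that $\theta$ only needs to be defined on vertices of $P_{s,k}(Y)$, where your own argument supplies the nearby point $x_{0}$ --- but it should be said.
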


  \begin{proof}

    Write $P_{s,k}(X)_{0}$ for the set of one-point members (vertices) of $P_{s,k}(X)$.
    
            Suppose that $y \in P_{s,k}(Y)_{0} - P_{s,k}(X)_{0}$. Then there are $k$ points $y_{1},\dots ,y_{k}$ of $Y$, distinct from $y$ such that $d(y,y_{i}) < s$. There is a $(k+1)$-tuple $(x_{0},x_{1}, \dots ,x_{k})$ such that
            \begin{equation*}
              d((x_{0}, \dots ,x_{k}),(y,y_{1}, \dots ,y_{k})) < r,
            \end{equation*}
            by assumption. Then $d(y,x_{0}) <r$, $d(y_{i},x_{i}) < r$, and so $d(x_{0},x_{i}) < s+2r$, and $x_{0}$ is a vertex of $P_{s+2r,k}(X)$.  Set $\theta(y) = x_{0}$, and observe that $d(y,\theta(y)) < r$.

            If $[y_{0}, \dots ,y_{p}]$ is a simplex of $P_{s,k}(Y)$ then $[\theta(y_{0}), \dots ,\theta(y_{p})]$ is a simplex of $P_{s+2r,k}(Y)$, as is the subset
              \begin{equation*}
                [y_{0}, \dots ,y_{p},\theta(y_{0}), \dots ,\theta(y_{p})].
              \end{equation*}
              Finish according to the method of proof for Theorem \ref{th 4}.
\end{proof}

A data set $Y \in D(Z)$ is finite, so there is a finite string of parameter values
\begin{equation*}
  0=s_{0} < s_{1} < \dots < s_{r}, 
\end{equation*}
consisting of the distances between elements of $Y$. I say that the $s_{i}$ are the {\it phase-change numbers} for $Y$.
  
\begin{corollary}\label{cor 7} 
  Suppose that $X \subset Y$ in $D(Z)$ and that $d_{H}(X^{k+1}_{dis},Y^{k+1}_{dis}) < r$. Suppose that $2r < s_{i+1}-s_{i}$. Then the inclusion
$i: P_{s_{i},k}(X) \to P_{s_{i},k}(Y)$ is a weak homotopy equivalence.
\end{corollary}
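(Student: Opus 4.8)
The plan is to deduce the corollary directly from Theorem \ref{th 6} by exploiting the fact that the shift by $2r$ is invisible at the parameter value $s_i$. The crucial observation is that the phase-change numbers record every distance realized in $Y$; since $X \subset Y$, every distance between elements of $X$ is among the $s_j$ as well. Consequently the membership condition $d(x,x') \le s$ defining the simplices, together with the degree-$k$ neighbour condition (which also depends only on comparisons $d(x,y) \le s$), can change only as $s$ crosses one of the values $s_j$. I would therefore argue that both posets $P_{s,k}(X)$ and $P_{s,k}(Y)$ are constant on the half-open interval $[s_i, s_{i+1})$.

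First I would record the consequence of the hypothesis $2r < s_{i+1}-s_i$: this gives $s_i \le s_i + 2r < s_{i+1}$, so $s_i + 2r$ lies in the interval $[s_i, s_{i+1})$ on which the posets are constant. Hence the shift inclusions $\sigma \colon P_{s_i,k}(X) \to P_{s_i+2r,k}(X)$ and $\sigma \colon P_{s_i,k}(Y) \to P_{s_i+2r,k}(Y)$ are both the identity, and I may silently identify $P_{s_i+2r,k}$ with $P_{s_i,k}$ for both $X$ and $Y$.

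Next, I would instantiate the diagram of Theorem \ref{th 6} at $s = s_i$. Under the identifications above, commutativity of the upper triangle reads $\theta \cdot i = 1$ on $P_{s_i,k}(X)$, so that $\theta$ is a genuine retraction of the inclusion $i$, while homotopy commutativity of the lower triangle reads $i \cdot \theta \simeq 1$ on $P_{s_i,k}(Y)$, with the homotopy supplied by the natural transformation that fixes the image of $P_{s_i,k}(X)$. Applying the nerve functor $B$ turns this natural transformation into a simplicial homotopy, so $Bi$ and $B\theta$ are mutually inverse up to homotopy; thus $Bi$ is a homotopy equivalence and the inclusion $i$ is a weak homotopy equivalence.

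The only point requiring genuine care --- the main obstacle in an otherwise formal argument --- is justifying that nothing in the posets changes on $[s_i, s_{i+1})$, in particular for the degree filtration: I must confirm that the set of neighbour relations $d(x,y) \le s$ is locally constant in $s$ away from the distances realized in $Y$, which is exactly what the definition of the phase-change numbers guarantees. Once this local constancy is in hand, the shift collapses to the identity and Theorem \ref{th 6} does the rest.
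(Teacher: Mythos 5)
Your proof is correct and is exactly the argument the paper intends: the phase-change numbers are defined immediately before the corollary precisely so that the posets $P_{s,k}(X)$ and $P_{s,k}(Y)$ are constant on $[s_i,s_{i+1})$, making the shift $\sigma$ in the Theorem \ref{th 6} interleaving an identity, whence $\theta$ becomes a genuine retraction with $i\cdot\theta \simeq 1$ after applying the nerve. No gaps; this matches the paper's (implicit) proof.
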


\begin{lemma}\label{lem 8}
    Suppose that $X \subset Y$ in $D(Z)$ and that $d_{H}(X^{k+1}_{dis},Y^{k+1}_{dis}) < r$. Suppose that that $Y^{k+1}_{dis} \ne \emptyset$. Then $d_{H}(X^{k}_{dis},Y^{k}_{dis}) < r$.
\end{lemma}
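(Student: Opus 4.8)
The plan is to reduce the statement to the one-sided inclusion criterion of the Hausdorff metric and then approximate a distinct $k$-tuple by first extending it to a distinct $(k+1)$-tuple. Since $X \subset Y$, the sets of ordered distinct tuples satisfy $X^{k}_{dis} \subset Y^{k}_{dis}$ and $X^{k+1}_{dis} \subset Y^{k+1}_{dis}$ inside $Z^{k}$ and $Z^{k+1}$ respectively. Thus both the hypothesis $d_{H}(X^{k+1}_{dis}, Y^{k+1}_{dis}) < r$ and the desired conclusion $d_{H}(X^{k}_{dis}, Y^{k}_{dis}) < r$ are instances of case 1) of the description of the Hausdorff metric: the hypothesis says that every $\vec y \in Y^{k+1}_{dis}$ lies within $r$ of some $\vec x \in X^{k+1}_{dis}$, and it suffices to prove the analogous statement for $k$-tuples.

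First I would fix an arbitrary $\vec y = (y_{1}, \dots, y_{k}) \in Y^{k}_{dis}$. Because $Y^{k+1}_{dis} \ne \emptyset$, the set $Y$ contains at least $k+1$ distinct points, so there is a point $y_{0} \in Y$ distinct from each of $y_{1}, \dots, y_{k}$. This produces a distinct $(k+1)$-tuple $(y_{0}, y_{1}, \dots, y_{k}) \in Y^{k+1}_{dis}$, to which the hypothesis applies: there is $(x_{0}, x_{1}, \dots, x_{k}) \in X^{k+1}_{dis}$ with $d\big((x_{0}, \dots, x_{k}),(y_{0}, \dots, y_{k})\big) < r$ in the product metric on $Z^{k+1}$.

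The final step is to discard the zeroth coordinate. The coordinates $x_{0}, x_{1}, \dots, x_{k}$ are pairwise distinct, so in particular $(x_{1}, \dots, x_{k}) \in X^{k}_{dis}$, and I claim $d\big((x_{1}, \dots, x_{k}),(y_{1}, \dots, y_{k})\big) < r$. This is where the only real point arises: one must know that deleting a coordinate does not increase the product-metric distance, i.e.\ the distance between the restricted $k$-tuples is bounded above by the distance between the original $(k+1)$-tuples. For the standard sup (or any $\ell^{p}$) product metric this monotonicity is immediate, since the aggregate over a sub-index-set is no larger than the aggregate over the full index set. Granting this, $(x_{1}, \dots, x_{k})$ witnesses that $\vec y$ lies within $r$ of $X^{k}_{dis}$; as $\vec y$ was arbitrary, case 1) gives $d_{H}(X^{k}_{dis}, Y^{k}_{dis}) < r$. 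The main obstacle is thus not really an obstacle but a bookkeeping check: confirming both that a suitable extra point $y_{0}$ exists (guaranteed by $Y^{k+1}_{dis} \ne \emptyset$) and that the product metric behaves monotonically under restriction of coordinates.
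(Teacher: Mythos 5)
Your proof is correct and takes essentially the same route as the paper's: extend an arbitrary distinct $k$-tuple of $Y$ to a distinct $(k+1)$-tuple (using $Y^{k+1}_{dis} \ne \emptyset$), apply the hypothesis to find a nearby distinct $(k+1)$-tuple in $X$, and discard the extra coordinate, noting that the product metric does not increase under coordinate deletion. The only difference is cosmetic --- you prepend the auxiliary point where the paper appends it --- and your explicit remarks about the one-sided Hausdorff criterion and metric monotonicity are points the paper leaves implicit.
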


\begin{proof}
    Suppose that $\{ y_{0}, \dots ,y_{k-1} \}$ is a set of $k$ distinct points of $Y$. Then there is a $y_{k} \in Y$ which is distinct from the $y_{i}$, for otherwise $Y$ has only $k$ elements. Then $(y_{0},y_{1}, \dots ,y_{k})$ is a $(k+1)$-tuple of distinct points of $Y$. There is a $(k+1)$-tuple $(x_{0},\dots ,x_{k})$ of distinct points of $X$ such that
    \begin{equation*}
      d((y_{0}, \dots ,y_{k-1},y_{k}),(x_{0}, \dots ,x_{k-1},x_{k})) < r.
      \end{equation*}
        It follows that
    \begin{equation*}
      d((y_{0}, \dots ,y_{k-1}),(x_{0}, \dots ,x_{k-1})) < r.
    \end{equation*}
\end{proof}    

\begin{corollary}
Suppose $X \subset Y$ in $D(Z)$ such that $d_{H}(X_{dis}^{k+1},Y_{dis}^{k+1}) < r$ for some non-negative number $r$. Then for $0 \leq j \leq k$ there is a homotopy commutative diagram of poset diagrams
    \begin{equation*}
  \xymatrix{
    P_{s,j}(X) \ar[r]^-{\sigma} \ar[d]_{i}  & P_{s+2r,j}(X) \ar[d]^{i} \\
    P_{s,j}(Y) \ar[r]_-{\sigma} \ar[ur]^(.4){\theta} & P_{s+2r,j}(Y)
  }
    \end{equation*}
    in which the upper triangle commutes, and the lower triangle commutes up to a homotopy that fixes the image of $P_{s,j}(X)$.
\end{corollary}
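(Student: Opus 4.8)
The plan is to reduce the assertion to Theorem~\ref{th 6}, applied one degree-parameter at a time. Read with $k$ replaced by $j$, Theorem~\ref{th 6} produces \emph{exactly} the required square at level $j$, namely the upper triangle commuting on the nose and the lower triangle commuting up to a homotopy fixing the image of $P_{s,j}(X)$ --- provided its hypothesis $d_{H}(X^{j+1}_{dis},Y^{j+1}_{dis}) < r$ is available. So the entire content of the corollary is to manufacture that single Hausdorff estimate at every intermediate level $0 \le j \le k$, starting from the hypothesis, which supplies it only at the top level $j = k$.

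First I would establish, by downward induction on $j$, that $d_{H}(X^{j+1}_{dis},Y^{j+1}_{dis}) < r$ for all $0 \le j \le k$. The base case $j = k$ is the hypothesis of the corollary. For the inductive step, assuming the estimate $d_{H}(X^{j+2}_{dis},Y^{j+2}_{dis}) < r$ at level $j+1$, I would apply Lemma~\ref{lem 8} with its parameter set equal to $j+1$: this carries $d_{H}(X^{j+2}_{dis},Y^{j+2}_{dis}) < r$ down to $d_{H}(X^{j+1}_{dis},Y^{j+1}_{dis}) < r$, which is precisely the level-$j$ estimate. With that estimate in hand for the given $j$, I would then invoke Theorem~\ref{th 6} with $k$ replaced by $j$ to obtain the displayed homotopy commutative square.

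The main obstacle is the non-emptiness side condition built into Lemma~\ref{lem 8}: the descent from level $m+1$ to level $m$ is licensed only when $Y^{m+1}_{dis} \ne \emptyset$. I would dispose of this by observing that $Y^{k+1}_{dis} \ne \emptyset$ is equivalent to $\vert Y \vert \ge k+1$, which forces $Y^{m}_{dis} \ne \emptyset$ for every $m \le k+1$; hence each step of the induction is valid and the chain of descents runs unobstructed from $j=k$ down to $j=0$. In the complementary situation, where $Y^{j+1}_{dis} = \emptyset$ for some $j$, no vertex of $Y$ can possess $j$ distinct neighbours, so $P_{s,j}(Y)$ and its subobject $P_{s,j}(X)$ are both empty and the square commutes trivially. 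Thus the range in which the conclusion carries genuine content is exactly the range in which the Lemma~\ref{lem 8} descent applies, and the two steps above close the argument there.
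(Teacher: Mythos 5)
Your proposal is correct and follows exactly the paper's own (one-line) proof, which reads ``Use Theorem \ref{th 6} and Lemma \ref{lem 8}'': you iterate Lemma \ref{lem 8} to push the Hausdorff estimate down to every level $j \le k$ and then invoke Theorem \ref{th 6} at each level. Your additional care with the non-emptiness hypothesis of Lemma \ref{lem 8}, including the observation that $Y^{j+1}_{dis} = \emptyset$ forces $P_{s,j}(Y) = \emptyset$ so the diagram commutes vacuously, is a welcome elaboration of a point the paper leaves implicit.
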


\begin{proof}
Use Theorem \ref{th 6} and Lemma \ref{lem 8}.
  \end{proof}

\section{Controlled equivalences}

A system of simplicial sets (or spaces) is a functor $X: [0,\infty) \to s\mathbf{Set}$ which takes values in the category of simplicial sets. One also says that such a functor is a diagram of simplicial sets with index category $[0,\infty)$. 
A map of systems $X \to Y$ is a natural transformation of functors that are defined on $[0,\infty)$.

We shall also discuss systems of sets, groups and chain complexes as functors defined on the poset $[0,\infty)$, which take values in the respective categories.
\medskip

\noindent
{\bf Examples}
\smallskip

\noindent
1)\ The functors $s \mapsto V_{s}(X), BP_{s}(X)$ are systems of spaces, for a data set $X \subset Z$. The functor $s \mapsto P_{s}(X)$ is a system of posets.
\smallskip

\noindent
2)\ If $X \subset Y \subset Z$ are data sets, the induced maps $P_{s}(X) \to P_{s}(Y)$ and $V_{s}(X) \to V_{s}(Y)$ define maps of systems $P_{\ast}(X) \to P_{\ast}(Y)$ and $V_{\ast}(X) \to V_{\ast}(Y)$.
\medskip

There are various ways to discuss homotopy theories of systems. The oldest of these is the projective model structure of Bousfield and Kan \cite{BK}, although they do not use the term ``projective'' --- this term arose much later in motivic homotopy theory.

In the projective structure, a map $f: X \to Y$ is a weak equivalence (respectively fibration) if each map $X_{s} \to Y_{s}$ is a weak equivalence (respectively fibration) of simplicial sets.
 The maps which are both weak equivalences and fibrations are called trivial fibrations.

A map $A \to B$ of systems is a projective cofibration  if it has the ``left lifting property'' with respect all maps which are weak equivalences and fibrations. Projective cofibrations are intensely studied and important, but will not be used here.

A map of systems $f:X \to Y$ such that each map $f:X_{s} \to Y_{s}$ is a weak equivalence (respectively fibration) is often said to be a sectionwise weak equivalence (respectively sectionwise fibration). A sectionwise cofibration is a map of systems $A \to B$ such the each map $A_{s} \to B_{s}$ is a monomorphism (or cofibration) of simplicial sets. Every projective cofibration is a sectionwise cofibration, but the converse is not true.
\medskip

Suppose that $X \subset Y$ in $D(Z)$ such that $d_{H}(X,Y) < r$. The Rips stability theorem (Theorem \ref{th 4}) says that we have a homotopy interleaving
    \begin{equation*}
  \xymatrix{
    BP_{s}(X) \ar[r]^-{\sigma} \ar[d]_{i}  & BP_{s+2r}(X) \ar[d]^{i} \\
    BP_{s}(Y) \ar[r]_-{\sigma} \ar[ur]^(.4){\theta} & BP_{s+2r}(Y)
  }
\end{equation*}
where the upper triangle commutes and the lower triangle commutes up to homotopy which is constant on the space $BP_{s}(X)$.

 Then we have the following:
 \begin{itemize}
   \item[1)] The natural transformation $i: \pi_{0}BP_{\ast}(X) \to \pi_{0}BP_{\ast}(Y)$ is a {\it $2r$-mono\-morphism}: if $i([x]) = i([y])$ in $\pi_{0}BP_{s}(Y)$ then $\sigma[x]=\sigma[y]$ in $\pi_{0}BP_{s+2r}(X)$.

\item[2)] The transformation $i: \pi_{0}BP_{\ast}(X) \to \pi_{0}BP_{\ast}(Y)$ is a {\it $2r$-epimorphism}: given $[y] \in \pi_{0}BP_{s}(Y)$, $\sigma[y]=i[x]$ for some $[x] \in \pi_{0}BP_{s+2r}(X)$.

\item[3)] All natural transformations $i: \pi_{n}(BP_{\ast}(X), x) \to \pi_{n}(BP_{\ast}(Y),i(x))$ of homotopy group functors are {\it $2r$-isomorphisms} in the sense that they are both $2r$-monomorphisms and $2r$-epimorphisms.
\end{itemize}
 \medskip

     The statements 1)--3) are ``derived'', and depend on having a way to talk about higher homotopy groups.

     There is a functorial weak equivalence $\gamma: X \to \Ex^{\infty}X$ of simplicial sets, where $\Ex^{\infty}X$ is a system of Kan complexes, and therefore have combinatorially defined homotopy groups \cite{GJ}. Thus, for example, the notation $\pi_{n}(BP_{s}(X),x)$ can refer to the combinatorial homotopy group $\pi_{n}(\Ex^{\infty}BP_{s}(X),x)$.

     There is an alternative, in that one could use the adjunction weak equivalence $\eta: X \to S(\vert X \vert)$, where $S$ is the singular functor and $\vert X \vert$ is the topological realization of $X$. The combinatorial homotopy groups of the Kan complex $S(\vert X \vert)$ coincide up to natural isomorphism with the standard homotopy groups of the space $\vert X \vert$. In this case, we would write $\pi_{n}(BP_{s}(X),x)$ to mean $\pi_{n}(\vert BP_{s}(X)\vert,x)$.

     There is a natural isomorphism
     \begin{equation*}
       \pi_{n}(\Ex^{\infty}Y,y) \cong \pi_{n}(\vert Y \vert,y)
     \end{equation*}
     for all simplicial sets $Y$ and vertices $y$ of $Y$, so the combinatorial and topological constructions produce isomorphic homotopy groups.

     The Kan $\Ex^{\infty}$ functor is combinatorial and therefore plays well with algebraic constructions, while the realization functor is familiar but transcendental.

     The homotopy groups $\pi_{n}(BP_{s}(X),x)$ coincide with the homotopy groups $\pi_{n}(V_{s}(X),x)$ of the Vietoris-Rips complex $V_{s}(X)$ up to natural isomorphism. A similar observation holds for the extant constructions of the degree Rips complexes.

The natural maps $\gamma: Y \to \Ex^{\infty}Y$ and $\eta: Y \to S(\vert Y \vert)$ are fibrant models for simplicial sets $Y$, in that the maps are weak equivalences which take values in fibrant simplicial sets (Kan complexes). Both constructions preserve monomorphisms. 

We shall write $Y \to FY$ for an arbitrary fibrant model construction that preserves monomorphisms. A formal nonsense argument implies that the choice of fibrant model does not matter.
\medskip

     Suppose $f: X \to Y$ is a map of systems. Say that $f$ is an {\it $r$-equivalence} if
\begin{itemize}
\item[1)] the map $f: \pi_{0}(X) \to \pi_{0}(Y)$ is an $r$-isomorphism of systems of sets
\item[2)]
the maps $f: \pi_{k}(X_{t},x) \to \pi_{k}(Y_{t}.f(x))$ are $r$-isomorphisms of systems of groups for $t \geq s$, for all $s \geq 0$ and $x \in X_{s}$.
\end{itemize}

\begin{remark}
Note the variation of condition 2) from the Vietoris-Rips example. In the general definition, we do not assume that all simplicial sets $X_{s}$ of the system $X$ have the same vertices, so the base points of condition 2) have to be chosen section by section.
This is relevant for comparisons of degree Rips systems $BP_{\ast,k}(X) \to BP_{\ast,k}(Y)$.
\end{remark}

\noindent
{\bf Examples}:\ 1)\ A map $f: X \to Y$ is a sectionwise equivalence if and only if it is a $0$-equivalence.
\medskip

\noindent
2)\ If $r \leq s$ and $f: X \to Y$ is an $r$-equivalence, then $f$ is an $s$-equivalence.
\medskip

\noindent
3)\
If $X \subset Y$ are (finite) data sets in a metric space $Z$, then there is a homotopy $r$-interleaving 
    \begin{equation*}
  \xymatrix{
    BP_{s}(X) \ar[r]^-{\sigma} \ar[d]_{i}  & BP_{s+r}(X) \ar[d]^{i} \\
    BP_{s}(Y) \ar[r]_-{\sigma} \ar[ur]^(.4){\theta} & BP_{s+r}(Y)
  }
\end{equation*}
    for sufficiently large $r$: take $r/2 > d_{H}(X,Y)$. This means that the map of systems $BP_{s}(X) \to BP_{s}(Y)$ is an $r$-equivalence for large $r$.

    A similar observation holds for the comparison  $BP_{s,k}(X) \to BP_{s,k}(Y)$ of degree Rips systems.
    \medskip

    Say that a map of systems $X \to Y$ is a {\it controlled equivalence} if it is an $r$-equivalence for some $r \geq 0$. 
    \medskip

The class of controlled equivalences of systems has a list of formal properties, which begins with the following result.

\begin{lemma}\label{lem 11}
    Suppose given a diagram of systems
\begin{equation*}
\xymatrix{
X_{1} \ar[r]^{f_{1}} \ar[d]_{\simeq} & Y_{1} \ar[d]^{\simeq} \\
X_{2} \ar[r]_{f_{2}} & Y_{2}
}
\end{equation*}
in which the vertical maps are sectionwise weak equivalences.
Then $f_{1}$ is an $r$-equivalence if and only if $f_{2}$ is an $r$-equivalence.
\end{lemma}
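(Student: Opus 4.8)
The plan is to reduce the statement to functoriality of the homotopy invariants $\pi_{0}$ and $\pi_{k}(-,x)$, together with a purely algebraic observation: the property of being an $r$-isomorphism of systems of sets or groups is preserved under pre- and post-composition with sectionwise isomorphisms. First I would apply these functors to the given commutative square. Writing $v\colon X_{1} \to X_{2}$ and $w\colon Y_{1} \to Y_{2}$ for the vertical maps, the fact that $v,w$ are sectionwise weak equivalences means they induce isomorphisms $\pi_{0}X_{1,t} \xrightarrow{\cong} \pi_{0}X_{2,t}$ and $\pi_{0}Y_{1,t} \xrightarrow{\cong} \pi_{0}Y_{2,t}$ for every $t$, natural in $t$, hence isomorphisms of systems of sets $\pi_{0}X_{1} \cong \pi_{0}X_{2}$ and $\pi_{0}Y_{1} \cong \pi_{0}Y_{2}$. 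Likewise a basepoint $x \in X_{1,s}$ is carried to $v(x) \in X_{2,s}$, and $v$ induces isomorphisms $\pi_{k}(X_{1,t},x) \xrightarrow{\cong} \pi_{k}(X_{2,t},v(x))$ of systems of groups in the variable $t \geq s$; commutativity of the square gives $f_{2}(v(x)) = w(f_{1}(x))$, so the corresponding target basepoints also match and $w$ induces compatible isomorphisms on the $\pi_{k}$ of the $Y$'s. In each case we are left with a commutative square of systems of sets or groups whose vertical arrows are isomorphisms and whose horizontal arrows are $\pi_{0}f_{i}$, respectively the maps induced by $f_{i}$ on $\pi_{k}$.

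Next I would record the algebraic lemma. Given a commutative square of systems of sets (or groups) with vertical isomorphisms $a,b$ and horizontal maps $g_{1},g_{2}$, one has $g_{2} = b\,g_{1}\,a^{-1}$ sectionwise, and since $a$ and $b$ are maps of systems they commute with the structure maps $\sigma$, i.e. $\sigma a^{-1} = a^{-1}\sigma$ and $\sigma b = b\sigma$. A direct check from the definitions then shows that $g_{1}$ is an $r$-monomorphism (respectively $r$-epimorphism) if and only if $g_{2}$ is. For the monomorphism case, an equality $g_{2}(\alpha) = g_{2}(\alpha')$ transports, via injectivity of $b$, to $g_{1}(a^{-1}\alpha) = g_{1}(a^{-1}\alpha')$; the hypothesis on $g_{1}$ yields $\sigma a^{-1}\alpha = \sigma a^{-1}\alpha'$, and commuting $a^{-1}$ past $\sigma$ and applying $a$ returns $\sigma\alpha = \sigma\alpha'$. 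The epimorphism case is symmetric: for $\alpha \in B_{2,s}$ one pulls back to $b^{-1}\alpha$, solves $\sigma(b^{-1}\alpha) = g_{1}(\gamma)$ upstairs, and pushes forward to $\sigma\alpha = g_{2}(a\gamma)$. This establishes the equivalence level by level, for $\pi_{0}$ and for each choice of basepoint arising from $X_{1}$.

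The one point that requires care — and the main, if modest, obstacle — is the basepoint quantifier in condition 2) of the definition of $r$-equivalence. To conclude that $f_{2}$ is an $r$-equivalence I must verify the condition at \emph{every} basepoint $x' \in X_{2,s}$, whereas the reduction above directly produces only basepoints of the form $v(x)$. Here I would use that $v$ is a sectionwise weak equivalence, hence a bijection on $\pi_{0}$, so every path component of $X_{2,s}$ contains a point $v(x)$; since homotopy groups at basepoints lying in the same path component are isomorphic, compatibly with the structure maps $\sigma$ and with the maps induced by $f_{2}$, the $r$-isomorphism property at $v(x)$ transfers to $x'$. The reverse implication, from $f_{2}$ to $f_{1}$, needs no such argument, since every basepoint $x \in X_{1,s}$ maps directly to $v(x) \in X_{2,s}$. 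Assembling the $\pi_{0}$ statement with the $\pi_{k}$ statements for all $k$, all $s$, and all admissible basepoints then yields that $f_{1}$ is an $r$-equivalence if and only if $f_{2}$ is.
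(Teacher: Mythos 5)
Your proposal is correct. For comparison: the paper states Lemma \ref{lem 11} with no proof at all, treating it as routine, so there is no argument of the paper's to measure yours against; what you have written is the natural argument, made honest. The routine part is exactly as you present it: apply $\pi_{0}$ and $\pi_{k}(-,x)$ to the square, note that sectionwise weak equivalences induce isomorphisms of the resulting systems of sets and groups, and observe that the $r$-monomorphism and $r$-epimorphism properties are invariant under conjugation $g_{2} = b\,g_{1}\,a^{-1}$ by sectionwise isomorphisms, since $a^{-1}$ and $b$ commute with the structure maps $\sigma$. The one genuinely non-routine point is the one you isolate: the definition of $r$-equivalence quantifies over \emph{all} basepoints $x' \in X_{2,s}$, while the square only delivers basepoints of the form $v(x)$; this is precisely the issue the paper's own Remark (basepoints must be chosen section by section) makes relevant. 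Your resolution is the right one: surjectivity of $\pi_{0}v$ puts $x'$ in the path component of some $v(x)$, and change of basepoint along a fixed path $\omega$ is natural with respect to maps of Kan complexes, so pushing $\omega$ forward by the structure maps $\sigma$ and by $f_{2}$ yields an isomorphism of systems of groups conjugating the $f_{2}$-induced maps at $v(x)$ into those at $x'$, after which your algebraic lemma applies again. Since your blind proof both reproduces the expected reduction and supplies the basepoint transfer that the paper leaves implicit, it is, if anything, more complete than the source.
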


\begin{lemma}\label{lem 12}
Suppose given a commutative triangle
\begin{equation*}
\xymatrix{
X \ar[r]^{f} \ar[dr]_{h} & Y \ar[d]^{g} \\
& Z
}
\end{equation*}
of maps of systems,'

Then if one of the maps is an $r$-equivalence, a second is an $s$-equivalence, then the third map is a $(r+s)$-equivalence.
\end{lemma}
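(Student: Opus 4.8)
The plan is to reduce the statement to a single additivity property for $r$-isomorphisms of systems of sets and of systems of groups, and then to apply that property to the homotopy functors $\pi_{0}$ and $\pi_{k}$ out of which an $r$-equivalence is built. Recall that a map $\phi \colon A \to B$ of systems (of sets, or of groups) is an $r$-isomorphism when it is both an $r$-monomorphism and an $r$-epimorphism, and that both conditions are expressed entirely through the parameter-shift structure maps $\sigma$ of the systems. Since the conditions defining an $r$-equivalence are precisely that $\pi_{0}$ and each $\pi_{k}(-,x)$ are $r$-isomorphisms of such systems, it suffices to prove the following: in a commutative triangle $A \xrightarrow{f} B \xrightarrow{g} C$ with $h = g f$, if one of $f, g, h$ is an $r$-isomorphism and a second is an $s$-isomorphism, then the third is an $(r+s)$-isomorphism.

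First I would dispose of this additivity statement for systems of sets and groups by three diagram chases, one for each choice of the unknown map, in each case verifying the monomorphism and epimorphism conditions separately by chasing elements through the structure maps $\sigma$ and using the naturality of $f$, $g$ and $h$. When $h$ is the unknown map one simply composes: an $r$-monomorphism followed by an $s$-monomorphism is an $(r+s)$-monomorphism, and similarly for epimorphisms, so the shifts add. When $f$ is the unknown map, its monomorphism condition follows at once from that of $h$, while an element of $B$ is lifted through $h$ (using that $h$ is an $s$-epimorphism) and the resulting ambiguity is corrected using that $g$ is an $r$-monomorphism, which accounts for the shift $r+s$. When $g$ is the unknown map, its epimorphism condition comes directly from that of $h$ via $f$, while its monomorphism condition is obtained by pulling two elements back through $f$ (using that $f$ is an $r$-epimorphism), comparing them against the $s$-monomorphism of $h$, and pushing forward again along $f$. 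These chases are routine.

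It then remains to feed the homotopy functors into this machine, and here the base points require attention. For $\pi_{0}$ there are no base points, so the additivity statement applies directly to the systems of sets $\pi_{0}(X), \pi_{0}(Y), \pi_{0}(Z)$. For $\pi_{k}$ one must fix a base point and form the associated systems of groups. In the composition case, and in the case where $f$ is the unknown map, the base points propagate consistently along $x \mapsto f(x) \mapsto h(x)$, so for each $x \in X_{s}$ the triangle of systems of groups is literally the image of the triangle of spaces under $\pi_{k}(-,x)$, and the chase of the previous paragraph applies verbatim.

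The main obstacle is the remaining case, where the unknown map is the middle map $g$ and the base points of the systems to be controlled live in $Y$ rather than in the image of $f$. Given an arbitrary $y \in Y_{u}$, the plan is to use that $f$ is a $\pi_{0}$ $r$-epimorphism to connect $y$, after the parameter shift by $r$, to a vertex of the form $f(x)$; a choice of connecting path then induces a change-of-base-point isomorphism between the system $\pi_{k}(Y_{\ast}, y)$ and the system $\pi_{k}(Y_{\ast}, f(x))$, compatible with the structure maps, and these isomorphisms are natural with respect to $g_{\ast}$ because $g$ carries the connecting path to a connecting path in $Z$. This realignment reduces the computation to the aligned situation already handled, and the careful accounting of this base-point transport, which does not affect $\pi_{0}$ at all, is the delicate point of the argument.
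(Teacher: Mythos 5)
Your overall strategy --- reduce to an additivity statement for $r$-isomorphisms of systems of sets and of groups, and prove that statement by element chases through the structure maps $\sigma$ --- is exactly the paper's: its proof consists of one such chase (systems of sets, $h$ an $r$-isomorphism and $g$ an $s$-isomorphism, concluding that $f$ is an $(r+s)$-epimorphism) together with the remark that the remaining cases are similar. Your chases for the cases where $h$ or $f$ is the unknown map are correct, and there the base points do propagate consistently, as you say. You also go beyond the paper in isolating the real difficulty: when $g$ is the unknown map, the definition of an $(r+s)$-equivalence quantifies over base points $y \in Y_u$ that need not lie in the image of $f$, and the paper's proof is silent about this.

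However, the step you defer as ``the careful accounting of this base-point transport'' is precisely the step that fails, and it cannot be completed to yield the stated bound. The $\pi_0$ $r$-epimorphism for $f$ only produces a path from $\sigma(y)$ to a vertex $f(x)$ after a shift by $r$ (the path lives in $Y_{u+r}$), and after transporting you must still lift the two transported classes through $f$ on $\pi_k$, which the hypotheses permit only after a further shift by $r$; the chase therefore closes at shift $2r+s$, not $r+s$, for the monomorphism condition. This loss is essential, not an artifact of your argument: take $X$ and $Z$ to be constant one-point systems, so that $h$ is a $0$-equivalence, and let $Y_t$ be a point $q=f(p)$ together with a disjoint circle $C$ with vertex $y_0$ for $t\in[0,1)$; attach an arc from $q$ to $y_0$ at $t=1$, and fill $C$ with a disk at $t=2$. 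Then $f$ is a $1$-equivalence (its base-point conditions are checked only at $q$, where every homotopy class dies within shift $1$, and $\pi_0$ merges within shift $1$), and $h$ is a $0$-equivalence, yet $g\colon Y \to Z$ is not a $1$-equivalence: the generator of $\pi_1(Y_0,y_0)\cong\mathbb{Z}$ maps isomorphically to $\pi_1(Y_1,y_0)$, so the $1$-monomorphism condition fails at the base point $y_0$, and $g$ is only a $2$-equivalence, i.e. a $(2r+s)$-equivalence. So your argument, carried out honestly, proves a $(2r+s)$-bound in this case; the $(r+s)$-bound asserted by the lemma fails at off-image base points, a defect that neither your transport plan nor the paper's set-level sketch detects.
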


\begin{proof}
  The arguments are set theoretic. We present an example.
  
Suppose $X,Y,Z$ are systems of sets, $h$ is an $r$-isomorphism and $g$ is an $s$-isomorphism. Given $z \in Y_{t}$, $g(z) = h(w)$ for some $w \in X_{t+s}$. Then $g(z) = g(f(w))$ in $Z_{t+s}$ so $z = f(w)$ in $Y_{t+s+r}$. It follows that $f$ is an $(r+s)$-epimorphism.
\end{proof}

Lemma \ref{lem 12} is an approximation of the triangle axiom for weak equivalences in the definition of a Quillen model structure.
\medskip

There is a calculus of controlled equivalences and sectionwise fibrations, which starts with the following result and concludes with Theorem \ref{th 15}.

 \begin{lemma}\label{lem 13}
  Suppose that $p: X \to Y$ is a sectionwise fibration of systems of Kan complexes, and that $p$ is an $r$-equivalence.

  Then each lifting problem
\begin{equation*}
  \xymatrix{
    \partial\Delta^{n} \ar[r]^{\alpha} \ar[d] & X_{s} \ar[d] \ar[r]^{\sigma}
    & X_{s+2r} \ar[d]^{p} \\
    \Delta^{n} \ar[r]_{\beta} \ar@{.>}[urr]^(.4){\theta} & Y_{s} \ar[r]_{\sigma} & Y_{s+2r}
  }
\end{equation*}
can be solved up to shift $2r$ in the sense that the indicated dotted arrow lifting exists.
 \end{lemma}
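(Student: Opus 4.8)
The plan is to reproduce the classical two-obstruction argument showing that a Kan fibration which is a weak equivalence is a trivial fibration, while tracking the parameter shifts forced by replacing ``weak equivalence'' with ``$r$-equivalence''. Fix a horn $\Lambda^{n}_{k} \subset \partial\Delta^{n}$. Since $\alpha$ restricts to a map $\Lambda^{n}_{k} \to X_{s}$ lying over $\beta|_{\Lambda^{n}_{k}}$, and $p: X_{s} \to Y_{s}$ is a Kan fibration by the sectionwise fibration hypothesis, the first step is to solve the horn-filling problem \emph{at level $s$}: this produces an $n$-simplex $w: \Delta^{n} \to X_{s}$ with $w|_{\Lambda^{n}_{k}} = \alpha|_{\Lambda^{n}_{k}}$ and $pw = \beta$. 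No shift is incurred at this stage, precisely because the fibration condition holds section by section.

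The simplex $w$ need not agree with $\alpha$ on the remaining face $d_{k}\Delta^{n}$. The two $(n-1)$-simplices $d_{k}w$ and $\alpha|_{d_{k}\Delta^{n}}$ of $X_{s}$ both lie over $d_{k}\beta$ and agree on $\partial(d_{k}\Delta^{n})$, so their difference determines an obstruction class in $\pi_{n-1}(F_{s})$, where $F_{s}$ denotes the fibre of $p: X_{s} \to Y_{s}$ over the relevant vertex of $Y_{s}$ (with a chosen lift as basepoint, as in the Remark). The key claim is then that this obstruction is killed by the shift $2r$, i.e.\ that the map $\sigma_{\ast}: \pi_{n-1}(F_{s}) \to \pi_{n-1}(F_{s+2r})$ is trivial; this is where the $r$-equivalence hypothesis enters and where the doubling of the parameter originates.

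To prove that claim I would run the long exact sequence of the fibration $F \to X \to Y$, whose connecting maps $\partial$ are natural in the parameter $s$ (so $\sigma$ commutes with $\partial$, and $\partial \circ p_{\ast} = 0$). Take $\xi \in \pi_{k}(F_{s})$ with image $\bar\xi \in \pi_{k}(X_{s})$; since $p_{\ast}\bar\xi = 0$ in $\pi_{k}(Y_{s})$, the $r$-monomorphism property gives $\sigma\bar\xi = 0$ in $\pi_{k}(X_{s+r})$, so by exactness $\sigma\xi = \partial\eta$ for some $\eta \in \pi_{k+1}(Y_{s+r})$. The $r$-epimorphism property then supplies $\zeta \in \pi_{k+1}(X_{s+2r})$ with $p_{\ast}\zeta = \sigma\eta$, whence
\begin{equation*}
\sigma^{2}\xi = \sigma(\partial\eta) = \partial(\sigma\eta) = \partial(p_{\ast}\zeta) = 0
\end{equation*}
in $\pi_{k}(F_{s+2r})$. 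Applying this with $k = n-1$ (using the $r$-isomorphism in degrees $n-1$ and $n$) shows $\sigma_{\ast}$ vanishes, so the two applications of the $r$-isomorphism condition, one monic and one epic, are exactly the two obstructions that compound to the shift $2r$.

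Finally, once the obstruction vanishes at level $s+2r$, there is a fibrewise homotopy rel $\partial(d_{k}\Delta^{n})$ from $\sigma(d_{k}w)$ to $\sigma(\alpha|_{d_{k}\Delta^{n}})$; feeding this into the homotopy lifting property of $p: X_{s+2r} \to Y_{s+2r}$, while holding the horn faces fixed, deforms $\sigma w$ to the required lift $\theta: \Delta^{n} \to X_{s+2r}$ with $\theta|_{\partial\Delta^{n}} = \sigma\alpha$ and $p\theta = \sigma\beta$. The lowest cases are handled directly: for $n=0$ the map $\beta$ is lifted using the $\pi_{0}$ epimorphism condition, and $n=1$ uses the $\pi_{0}$ and $\pi_{1}$ conditions in place of the exact-sequence step. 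I expect the main obstacle to be this last step — converting the \emph{vanishing} of the obstruction class into an \emph{honest} corrected simplicial lift $\theta$ that agrees with $\sigma\alpha$ on all of $\partial\Delta^{n}$ (not merely up to homotopy), together with the basepoint bookkeeping needed to align the fibre homotopy groups $\pi_{k}(F_{s})$ with the $r$-isomorphism hypotheses stated for $\pi_{k}(X_{s},x) \to \pi_{k}(Y_{s},p(x))$.
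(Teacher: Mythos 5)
Your proposal is correct in substance, and it runs on exactly the same two group-theoretic inputs as the paper's proof --- the $r$-monomorphism condition in degree $n-1$ and the $r$-epimorphism condition in degree $n$, compounding to the shift $2r$ --- but the packaging is genuinely different. The paper follows \cite[I.7.10]{GJ} directly: it first replaces the whole lifting problem, up to homotopy, by one in which the boundary map is concentrated on a single face, $(\alpha_{0},\ast,\dots,\ast)$, so that the first obstruction is a class $[\alpha_{0}] \in \pi_{n-1}(X_{s},\ast)$ with $p_{\ast}[\alpha_{0}]=0$, hence $\sigma_{\ast}[\alpha_{0}]=0$ in $\pi_{n-1}(X_{s+r},\ast)$; the trivializing homotopy then converts the problem into a class $[\omega] \in \pi_{n}(Y_{s+r},\ast)$, which after a further shift lifts to $\pi_{n}(X_{s+2r},\ast)$ by the $r$-epimorphism property. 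You instead fill the horn $\Lambda^{n}_{k}$ at level $s$ (correctly, with no shift), place the obstruction in $\pi_{n-1}$ of the fibre, and kill it via the long exact sequence; your computation $\sigma^{2}\xi = \sigma(\partial\eta) = \partial(\sigma\eta) = \partial(p_{\ast}\zeta) = 0$ is right, and it is precisely the paper's two steps run through the LES, i.e.\ the invariant statement that the structure maps $\pi_{k}(F_{s}) \to \pi_{k}(F_{s+2r})$ of the fibre system are trivial. What the paper's route buys: by moving everything to a basepoint at the outset, it never needs the fibrewise difference class over the non-constant base simplex $d_{k}\beta$, nor naturality of the connecting map in the parameter $s$ --- the technical debts you yourself flag at the end, which are standard but not free (the difference class requires a fibrewise trivialization of $p$ over $d_{k}\beta$, and one must check that $\sigma$ carries the obstruction of the original problem to the obstruction of the shifted one). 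What your route buys: the doubling of the parameter is made transparent as a property of the fibre system, and the correction step you describe (lifting the fibrewise homotopy rel boundary against $p: X_{s+2r} \to Y_{s+2r}$ along the anodyne inclusion $(\Delta^{n} \times \{0\}) \cup (\partial\Delta^{n} \times \Delta^{1}) \subset \Delta^{n} \times \Delta^{1}$) is an honest, complete way to convert vanishing of the obstruction into the required lift $\theta$, which the paper leaves implicit in the phrase ``up to homotopy, giving the desired lifting.'' Your handling of the low-dimensional cases via the $\pi_{0}$ conditions also matches what the hypothesis provides.
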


 Lemma \ref{lem 13} is the analogue of a classical result of simplicial homotopy theory \cite[I.7.10]{GJ}, and its proof is a variant of the standard obstruction theoretic argument for that result.

\begin{proof}[Proof of Lemma \ref{lem 13}]
The original diagram can be replaced up to homotopy by a diagram
\begin{equation}\label{eq 4}
\xymatrix@C=40pt{
\partial\Delta^{n} \ar[r]^{(\alpha_{0},\ast,\dots ,\ast)} \ar[d] & X_{s} \ar[d]^{p}
\ar[r]^{\sigma}
& X_{s+r} \ar[d]^{p} \\
\Delta^{n} \ar[r]_{\beta}& Y_{s} \ar[r]_{\sigma} & Y_{s+r}
}
\end{equation}
Then $p_{\ast}([\alpha_{0}]) = 0$ in $\pi_{n-1}(Y_{s},\ast)$, so $\sigma_{\ast}([\alpha_{0}]) = 0 $ in $\pi_{n-1}(X_{s+r},\ast)$.
\smallskip

The trivializing homotopy for $\sigma(\alpha_{0})$ in $X_{s+r}$ defines a homotopy from the outer square of (\ref{eq 4}) to the diagram
\begin{equation*}
\xymatrix{
\partial\Delta^{n} \ar[r]^{\ast} \ar[d] & X_{s+r} \ar[d]^{p} \\
\Delta^{n} \ar[r]_{\omega} & Y_{s+r}
}
\end{equation*}

The element $[\omega] \in \pi_{n}(Y_{s+2r},\ast)$ lifts to an element of $\pi_{n}(X_{s+2r},\ast)$ up to homotopy, giving the desired lifting.
\end{proof}

 \begin{lemma}\label{lem 14}
Suppose that $p: X \to Y$ is a sectionwise fibration of systems of Kan complexes, and that all lifting problems
\begin{equation*}
  \xymatrix{
    \partial\Delta^{n} \ar[r] \ar[d] & X_{s} \ar[d] \ar[r]^{\sigma}
    & X_{s+r} \ar[d]^{p} \\
    \Delta^{n} \ar[r] \ar@{.>}[urr]^(.4){\theta} & Y_{s} \ar[r]_{\sigma} & Y_{s+r}
  }
\end{equation*}
have solutions up to shift $r$, in the sense that the dotted arrow exists making the diagram commute. Then the map $p: X \to Y$ is an $r$-equivalence.
   \end{lemma}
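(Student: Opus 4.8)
The plan is to verify directly that $p$ satisfies the four conditions making up the definition of an $r$-equivalence---that $p_{\ast}$ is an $r$-epimorphism and an $r$-monomorphism on $\pi_{0}$, and likewise on each $\pi_{k}$ relative to every basepoint---by exhibiting in each case a single lifting problem of the displayed form whose solution up to shift $r$ produces the required element or path in $X_{t+r}$. Throughout I would use the standard combinatorial description of the homotopy groups of a Kan complex $Z$ based at a vertex $z$: a class in $\pi_{k}(Z,z)$ is represented by a simplex $\alpha \in Z_{k}$ all of whose faces are the constant simplex at $z$, and $[\alpha]=0$ exactly when $\alpha$ is a face of some $w \in Z_{k+1}$ whose remaining faces are all constant at $z$. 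Since $\sigma$ and $p$ are simplicial maps and $p$ commutes with the maps $\sigma$, they carry such representatives and fillers to representatives and fillers of the corresponding classes, which is what lets me read the homotopy-group conclusions off the lifted simplices.

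For the $\pi_{0}$ statements I would take $n=0$ and $n=1$. A vertex $y \in Y_{s}$ gives a lifting problem with $\partial\Delta^{0}=\emptyset$ and $\beta$ picking out $y$; the solution is a vertex $x \in X_{s+r}$ with $p(x)=\sigma(y)$, which is the $r$-epimorphism property. If $x_{0},x_{1} \in X_{s}$ have $p(x_{0})$ and $p(x_{1})$ in the same path component of $Y_{s}$, a connecting edge in $Y_{s}$ serves as $\beta$ in an $n=1$ problem with $\alpha$ picking out $x_{0},x_{1}$; the lift is a path in $X_{s+r}$ from $\sigma(x_{0})$ to $\sigma(x_{1})$, which is the $r$-monomorphism property.

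For the higher groups, fix $s$, a vertex $x \in X_{s}$, set $y=p(x)$, and work at a level $t \ge s$ with basepoint $\sigma(x)$. Surjectivity up to shift is the case $n=k$: a representative $\gamma \in (Y_{t})_{k}$ of a class in $\pi_{k}(Y_{t},\sigma(y))$ is used as $\beta$, with $\alpha$ the constant map at $\sigma(x)$; the lift $\theta \in (X_{t+r})_{k}$ has constant boundary and satisfies $p\cdot\theta = \sigma\cdot\gamma$, so $p_{\ast}[\theta]=\sigma_{\ast}[\gamma]$ in $\pi_{k}(Y_{t+r},\sigma(y))$.

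Injectivity up to shift is the case $n=k+1$, and is the step I expect to require the most care. Given $[\delta] \in \pi_{k}(X_{t},\sigma(x))$ with $p_{\ast}[\delta]=0$, I would choose a filler $w \in (Y_{t})_{k+1}$ witnessing $[p\cdot\delta]=0$, and assemble a map $\alpha$ on $\partial\Delta^{k+1}$ one of whose faces is $\delta$ and whose remaining faces are constant at $\sigma(x)$, with $\beta=w$. The point needing verification is that this $\alpha$ is a well-defined simplicial map on $\partial\Delta^{k+1}$ and that the square commutes; both follow because $\delta$ has constant boundary equal to $\sigma(x)$ while the matching face of $w$ is $p\cdot\delta$ and the remaining faces of $w$ are constant at $\sigma(y)$. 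The solution $\theta \in (X_{t+r})_{k+1}$ then has the face $\sigma\cdot\delta$ and constant remaining faces, which is precisely a witness that $\sigma_{\ast}[\delta]=0$ in $\pi_{k}(X_{t+r},\sigma(x))$. Since $p_{\ast}$ and $\sigma_{\ast}$ are group homomorphisms, the vanishing of such classes yields the $r$-monomorphism property, so $p_{\ast}$ is an $r$-isomorphism of systems of groups on each $\pi_{k}$ and $p$ is an $r$-equivalence.
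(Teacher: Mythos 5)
Your proposal is correct and follows essentially the same route as the paper's (much terser) proof: the paper likewise represents a class $[\alpha]\in\pi_{n-1}(X_{s},\ast)$ killed by $p_{\ast}$ as one face of a map $\partial\Delta^{n}\to X_{s}$ with a nullhomotopy of $p\cdot\alpha$ as the bottom map, reads off $\sigma_{\ast}[\alpha]=0$ from the lift $\theta$, and declares the surjectivity and $\pi_{0}$ cases ``similar.'' Your write-up simply makes explicit the case analysis ($n=0$ and $n=1$ for $\pi_{0}$, $n=k$ for the $r$-epimorphism, $n=k+1$ for the $r$-monomorphism) that the paper leaves to the reader.
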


\begin{proof}
If $p_{\ast}([\alpha]) =0$ for $[\alpha] \in \pi_{n-1}(X_{s},\ast)$, then there is a diagram on the left above. The existence of $\theta$ gives $\sigma_{\ast}([\alpha]) = 0$ in $\pi_{n-1}(X_{s+r},\ast)$.

The argument for $r$-surjectivity is similar.
\end{proof}

Lemma \ref{lem 13} and Lemma \ref{lem 14} together imply the following:

\begin{theorem}\label{th 15}
  Suppose given a pullback diagram
  \begin{equation*}
    \xymatrix{
      X' \ar[r] \ar[d]_{p'} & X \ar[d]^{p} \\
    Y' \ar[r] & Y
  }
  \end{equation*}
 where $p$ is a sectionwise fibration and an $r$-equivalence.

  Then the map $p'$ is a sectionwise fibration and a $2r$-equivalence.
\end{theorem}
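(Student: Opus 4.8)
The plan is to separate the two conclusions. That $p'$ is a sectionwise fibration is immediate: limits of systems are computed sectionwise, so each $p'_{s}\colon X'_{s} \to Y'_{s}$ is the pullback in $s\mathbf{Set}$ of the fibration $p_{s}$ along $Y'_{s} \to Y_{s}$, and fibrations of simplicial sets are stable under pullback. The content is therefore to show that $p'$ is a $2r$-equivalence, and for this I would reduce to the situation of Lemmas \ref{lem 13} and \ref{lem 14} --- a sectionwise fibration between systems of Kan complexes --- and then transport the shifted lifting property of $p$ across the pullback square.

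First I would manufacture a Kan model of the whole square. Choose a sectionwise fibrant model $Y \to FY$, and factor the composite $X \to Y \to FY$ sectionwise as $X \to FX \xrightarrow{Fp} FY$ with $X \to FX$ a sectionwise trivial cofibration and $Fp$ a sectionwise fibration; then $FX$ is sectionwise Kan and, by Lemma \ref{lem 11}, $Fp$ is again an $r$-equivalence. Next choose a sectionwise fibrant model $Y' \to FY'$ together with an extension $FY' \to FY$ of $Y' \to Y \to FY$ (using fibrancy of $FY$), and set $\tilde{X} = FX \times_{FY} FY'$ with projection $\tilde{p}\colon \tilde{X} \to FY'$. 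Then $\tilde{p}$ is a sectionwise fibration and $\tilde{X}$ is sectionwise Kan, so $\tilde{p}$ is a map of the type handled by Lemmas \ref{lem 13}--\ref{lem 14}. Because $s\mathbf{Set}$ is right proper and both $p$ and $Fp$ are sectionwise fibrations, the gluing lemma for homotopy pullbacks shows that the comparison $X' \to \tilde{X}$ is a sectionwise weak equivalence; together with $Y' \to FY'$ this produces a square as in Lemma \ref{lem 11}, so $p'$ is a $2r$-equivalence if and only if $\tilde{p}$ is.

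It then remains to prove the Kan case, namely that $\tilde{p}$ is a $2r$-equivalence, and here the pullback does the work. By Lemma \ref{lem 13} applied to the sectionwise fibration and $r$-equivalence $Fp$, every boundary lifting problem for $Fp$ can be solved up to shift $2r$. Given such a lifting problem for $\tilde{p}$ over $\partial\Delta^{n} \subset \Delta^{n}$ with data $\alpha'\colon \partial\Delta^{n} \to \tilde{X}_{s}$ and $\beta'\colon \Delta^{n} \to FY'_{s}$, I would push it forward along the projections $\tilde{X} \to FX$ and $FY' \to FY$ to a lifting problem for $Fp$, solve it by a map $\theta\colon \Delta^{n} \to FX_{s+2r}$, and then assemble the required lift $\theta'\colon \Delta^{n} \to \tilde{X}_{s+2r}$ out of $\theta$ and $\sigma\beta'$ using the universal property of $\tilde{X}_{s+2r} = FX_{s+2r} \times_{FY_{s+2r}} FY'_{s+2r}$. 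The two prescribed components agree over $FY_{s+2r}$ by naturality of the shift maps $\sigma$, and a short check on components shows that $\theta'$ restricts to $\sigma\alpha'$ on $\partial\Delta^{n}$ and satisfies $\tilde{p}\,\theta' = \sigma\beta'$. Thus $\tilde{p}$ has the shifted lifting property at parameter $2r$, and Lemma \ref{lem 14} with $r$ replaced by $2r$ gives that $\tilde{p}$ is a $2r$-equivalence, completing the argument.

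I expect the only genuinely non-formal step to be the reduction in the second paragraph. The transfer of the lifting property through the pullback is purely a matter of the universal property and naturality of $\sigma$, and introduces no further shift --- the doubling from $r$ to $2r$ is entirely inherited from Lemma \ref{lem 13}. The delicate point is that fibrant replacement does not preserve pullbacks on the nose, so one must know that $X' \to \tilde{X}$ is a weak equivalence; this is exactly where right properness of $s\mathbf{Set}$ (equivalently, that a pullback along a fibration is a homotopy pullback) is needed. If instead one reads Theorem \ref{th 15} as already concerning systems of Kan complexes, as in Lemmas \ref{lem 13}--\ref{lem 14}, this reduction is unnecessary and the proof is just the lifting transfer of the third paragraph.
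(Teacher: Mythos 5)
Your proof is correct, and its core --- pushing a boundary-filling problem for $p'$ forward along the pullback projections, solving it up to shift $2r$ via Lemma \ref{lem 13}, reassembling the lift by the universal property of the pullback and naturality of $\sigma$, then invoking Lemma \ref{lem 14} --- is exactly the paper's argument; the paper's proof is just this lifting transfer, stated in two sentences. Where you go beyond the paper is the second paragraph: the paper applies Lemmas \ref{lem 13} and \ref{lem 14} to $p$ and $p'$ directly, even though both lemmas are stated only for sectionwise fibrations of systems of Kan complexes, and it never addresses how to reduce to that case. Your fibrant-replacement argument (factor $X \to Y \to FY$, form $\tilde{X} = FX \times_{FY} FY'$, use right properness of simplicial sets to see that $X' \to \tilde{X}$ is a sectionwise equivalence, then conclude via Lemma \ref{lem 11}) supplies precisely this missing reduction, at the cost of invoking right properness and the gluing lemma for homotopy pullbacks. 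So your write-up is, if anything, more complete than the paper's: same key mechanism, plus a justification of the implicit Kan hypothesis that the paper elides.
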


\begin{proof}
  All lifting problems
  \begin{equation*}
  \xymatrix{
    \partial\Delta^{n} \ar[r]^{\alpha} \ar[d] & X'_{s} \ar[d] \ar[r]^{\sigma}
    & X'_{s+2r} \ar[d]^{p'} \\
    \Delta^{n} \ar[r]_{\beta} \ar@{.>}[urr]^(.4){\theta} & Y'_{s} \ar[r]_{\sigma} & Y'_{s+2r}
  }
\end{equation*}
for $p'$ have solutions up to shift $2r$, since it is a pullback of a map $p$ that has that property by Lemma \ref{lem 13}. Then Lemma \ref{lem 14} implies that $p'$ is a $2r$-equivalence.
  \end{proof}

Suppose that $i: A \to B$ is a sectionwise cofibration of projective cofibrant systems (i.e. systems of monomorphisms), and form the diagram
\begin{equation*}
  \xymatrix{
    A \ar[r]^{\eta} \ar[d]_{i} & FA \ar[d]^{i_{\ast}} \\
    B \ar[r]_{\eta} & FB
  }
\end{equation*}
in which the horizontal maps are fibrant models, and in particular sectionwise equivalences. Typically, one sets $FA = \Ex^{\infty}A$.

Say that the map $i$ {\it admits an $r$-interleaving} if, for all $s$, there are maps $\theta: B_{s} \to FA_{s+r}$ such that the diagram
\begin{equation*}
  \xymatrix{
    A_{s} \ar[r]^{\sigma} \ar[d]_{i} & A_{s+r} \ar[r]^{\eta} & FA_{s+r} \\
    B_{s} \ar[urr]_{\theta}
  }
\end{equation*}
commutes, and the diagram
\begin{equation*}
  \xymatrix{
    && FA_{s+r} \ar[d]^{i_{\ast}} \\
    B_{s} \ar[urr]^{\theta} \ar[r]_{\sigma} & B_{s+r} \ar[r]_{\eta} & FB_{s+r}
  }
\end{equation*}
commutes up to a homotopy which restricts to the constant homotopy on $A_{s}$.
\medskip

An $r$-interleaving is effectively a strong deformation retraction up to shift $r$.

\begin{lemma}\label{lem 16}
Suppose that the map $i: A \to B$ admits an $r$-interleaving. Then the map $i: A \to B$ is an $r$-equivalence.
\end{lemma}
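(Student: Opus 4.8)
The plan is to compute all homotopy invariants on the fibrant models $FA$ and $FB$, and to extract from the interleaving data \emph{two} separate relations on homotopy groups --- one giving the $r$-monomorphism condition and one giving the $r$-epimorphism condition. Recall that the $r$-interleaving supplies, for each $s$, a map $\theta\colon B_s \to FA_{s+r}$ with $\theta\,i = \eta\,\sigma$ on the nose, and with $i_\ast\,\theta \simeq \eta\,\sigma$ via a homotopy that is constant on $A_s$.

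First I would promote $\theta$ to a map of Kan complexes. Since $\eta\colon B_s \to FB_s$ is a trivial cofibration (a weak equivalence and a monomorphism) and $FA_{s+r}$ is a Kan complex, $\theta$ lifts through $\eta$ to a map $\tilde\theta\colon FB_s \to FA_{s+r}$ with $\tilde\theta\,\eta = \theta$; it is $\tilde\theta$ that induces the homomorphisms $\tilde\theta_\ast$ on homotopy groups and the function $\tilde\theta_\ast$ on $\pi_0$. From the upper triangle, which commutes exactly, I read off $\tilde\theta\,\eta_B\,i = \theta\,i = \eta_A\,\sigma = \sigma_\ast\,\eta_A$ as maps $A_s \to FA_{s+r}$; because $\eta_A$ is a weak equivalence and the target is fibrant this forces $\tilde\theta\circ i_\ast \simeq \sigma_\ast$ as maps $FA_s \to FA_{s+r}$, hence the relation $\tilde\theta_\ast\circ i_\ast = \sigma_\ast$ on $\pi_k(FA_s,\eta x)$ and on $\pi_0$. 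Dually, from the lower triangle, $i_\ast\,\theta = i_\ast\,\tilde\theta\,\eta_B \simeq \eta_B\,\sigma = \sigma_\ast\,\eta_B$, giving $i_\ast\circ\tilde\theta_\ast = \sigma_\ast$ on $\pi_k(FB_s,\eta i x)$ and on $\pi_0$.

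These two relations deliver the two halves of the $r$-isomorphism condition directly, and this is the routine endgame. The relation $\tilde\theta_\ast\circ i_\ast = \sigma_\ast$ yields the $r$-monomorphism property: if $i_\ast(\alpha)=0$ then $\sigma_\ast(\alpha)=\tilde\theta_\ast i_\ast(\alpha)=0$, while on $\pi_0$ the equality $i_\ast(\alpha)=i_\ast(\beta)$ gives $\sigma_\ast(\alpha)=\sigma_\ast(\beta)$. The relation $i_\ast\circ\tilde\theta_\ast = \sigma_\ast$ yields the $r$-epimorphism property: any $\beta$ satisfies $\sigma_\ast(\beta)=i_\ast(\tilde\theta_\ast\beta)$, so $\sigma_\ast(\beta)$ lies in the image of $i_\ast$. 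Running this at every $s\geq 0$ and every base point $x\in A_s$ exhibits $i$ as simultaneously an $r$-monomorphism and an $r$-epimorphism on $\pi_0$ and on all $\pi_k$, i.e. an $r$-equivalence.

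The one step that needs genuine care --- and that I expect to be the main obstacle --- is the base-point bookkeeping in the middle paragraph. The definition of $r$-equivalence only requires base points that are images of vertices of the \emph{source} $A$, and the homotopy in the lower triangle is constant on $A_s$, so at those base points it is genuinely based and no action of $\pi_1$ on the induced maps can intervene. To pass from ``homotopic after precomposition with $\eta$'' to ``equal on homotopy groups'' I would pass to topological realizations, where precomposition with the weak equivalence $\eta$ induces isomorphisms on all homotopy groups; everything else in the argument is formal.
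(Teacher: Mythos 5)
Your proposal is correct and follows essentially the same route as the paper's proof: both use the interleaving map $\theta$ to induce maps on $\pi_0$ and on homotopy groups, extract the $r$-monomorphism half from the strictly commuting upper triangle and the $r$-epimorphism half from the lower triangle, and exploit the fact that the lower-triangle homotopy is constant on $A_s$ to control base points. Your lifting of $\theta$ to $\tilde\theta\colon FB_s \to FA_{s+r}$ and the realization argument for the base-point bookkeeping are just careful implementations of steps the paper treats informally (``$\eta$ is a weak equivalence, so $\theta$ induces \dots''), not a different method.
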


\begin{proof}
  The map $\eta: A_{s+r} \to FA_{s+r}$ is a weak equivalence, so that $\theta: B_{s} \to FA_{s+r}$ induces functions $\theta_{\ast}: \pi_{0}B_{s} \to \pi_{0}A_{s+r}$ and homomorphisms
\begin{equation*}
  \theta_{\ast}: \pi_{n}(B_{s},i(x)) \to \pi_{n}(A_{s+r},\sigma(x)).
  \end{equation*}

  The diagram
  \begin{equation*}
    \xymatrix{
      \pi_{0}A_{s} \ar[r]^-{\sigma} \ar[d]_{i} & \pi_{0}A_{s+r} \ar[d]^{i} \\
      \pi_{0}B_{s} \ar[r]_-{\sigma} \ar[ur]^{\theta} & \pi_{0}B_{s+r}
    }
    \end{equation*}
  commutes, so that the map of systems of sets $\pi_{0}A \to \pi_{0}B$ is an $r$-isomorphism.

  The diagram
  \begin{equation*}
    \xymatrix{
      \pi_{n}(A_{s},x) \ar[r]^-{\sigma} \ar[d]_{i}
      & \pi_{n}(A_{s+r},\sigma(x)) \ar[d]^{i} \\
      \pi_{n}(B_{s},i(x)) \ar[ur]^{\theta} \ar[r]_-{\sigma} & \pi_{n}(B_{s+r},\sigma i(x))
    }
  \end{equation*}
  also commutes. In effect, $\sigma(i(x)) = i(\theta(i(x))$, and the homotopy $\sigma \simeq i \cdot \theta$ restricts to the identity on $i(x)$, so that $[\sigma(\alpha)] = [i(\theta(\alpha)]$ in $\pi_{n}(B_{s+r},\sigma i(x))$ for any representing simplex $\alpha: \Delta^{n} \to FB_{s}$ of a homotopy group element $[\alpha] \in \pi_{n}(B_{s},i(x))$.  
  \end{proof}

\begin{lemma}\label{lem 17}
  Suppose that $i: A \to B$ is a sectionwise cofibration between systems, such that $i$ admits an $r$-interleaving. Suppose also that the diagram
  \begin{equation}\label{eq 5}
    \xymatrix{
      A \ar[r]^{\alpha} \ar[d]_{i} & C \ar[d]^{i'} \\
      B \ar[r]_{\beta} & D
    }
  \end{equation}
  is a pushout. Then the map $i'$ admits an $r$-interleaving.  
\end{lemma}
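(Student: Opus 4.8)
The plan is to assemble the interleaving data for $i'$ out of the given data for $i$ by exploiting the universal property of the pushout (\ref{eq 5}). Since colimits of systems are computed sectionwise, the square (\ref{eq 5}) gives $D_{s} = B_{s} \cup_{A_{s}} C_{s}$ for every $s$, and because the functor $(-) \times \Delta^{1}$ has a right adjoint it also gives $D_{s} \times \Delta^{1} = (B_{s} \times \Delta^{1}) \cup_{A_{s}\times\Delta^{1}} (C_{s} \times \Delta^{1})$. Thus a map out of $D_{s}$, or a homotopy on $D_{s}$, is precisely a compatible pair of such data on $B_{s}$ and on $C_{s}$ that agree after restriction along $A_{s}$. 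Write $\theta: B_{s} \to FA_{s+r}$ and $h$ for the map and the homotopy provided by the $r$-interleaving of $i$, where $h$ runs from $i_{\ast}\theta$ to $\eta\sigma$ and is constant on $A_{s}$.

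First I would define $\theta': D_{s} \to FC_{s+r}$ on the two summands by $\theta' i' = \eta\sigma$ on $C_{s}$ and $\theta' \beta = F(\alpha)\circ\theta$ on $B_{s}$. To see that these glue, restrict both along $A_{s}$: along $C$ one obtains $\eta\sigma\alpha$, while along $B$ the first interleaving identity $\theta i = \eta\sigma$, the naturality of $\eta$, and the pushout relation $i'\alpha = \beta i$ rewrite $F(\alpha)\theta i$ as $\eta\sigma\alpha$ as well. Hence $\theta'$ is well defined by the universal property, and the first interleaving diagram for $i'$ holds on the nose by construction on the summand $C_{s}$.

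The second, slightly more delicate, requirement is a homotopy $F(i')\theta' \simeq \eta\sigma$ on $D_{s}$ that is constant on $C_{s}$. On the summand $C_{s}$ the two maps already agree, again by naturality of $\eta$ together with $\sigma i' = i'\sigma$, so the constant homotopy serves there. On the summand $B_{s}$ I would push the homotopy $h$ forward by $F(\beta)$; inspecting the two endpoints and using $i'\alpha = \beta i$ and naturality of $\eta$ shows that $F(\beta)h$ runs from $F(i')\theta'$ to $\eta\sigma$ after restriction to $B_{s}$, as needed. These two homotopies glue into a homotopy on $D_{s} \times \Delta^{1}$ provided they agree after restriction along $A_{s} \times \Delta^{1}$: the $C$-side is constant, and the $B$-side $F(\beta)h$ is constant there because $h$ is constant on $A_{s}$, and a short computation identifies both constant values with $\eta\sigma(i'\alpha)$. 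The resulting homotopy is constant on $C_{s}$ by construction, completing the interleaving.

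The real work is bookkeeping rather than homotopy theory: each gluing is an equality of maps, or of homotopies, that must be checked on the amalgamating object $A_{s}$, and every such check combines the two interleaving axioms for $i$, the naturality of $\eta\colon 1 \to F$, the functoriality of $F$, and the fact that $\alpha,\beta,i,i'$ are maps of systems commuting with the structure maps $\sigma$ and satisfying $i'\alpha = \beta i$. The only genuinely structural inputs are that $(-)_{s}$ and $(-)_{s}\times\Delta^{1}$ both carry the pushout of systems to a sectionwise pushout, so that all required maps and homotopies can be produced by a single application of the universal property; the hypothesis that $i$ is a sectionwise cofibration additionally guarantees that $i'$ is one, pushouts of monomorphisms being monomorphisms sectionwise.
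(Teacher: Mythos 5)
Your proposal is correct and follows essentially the same route as the paper's proof: define $\theta'$ on the pushout $D_{s}$ by gluing $\alpha_{\ast}\theta$ on $B_{s}$ with $\eta\sigma$ on $C_{s}$, then glue $\beta_{\ast}h$ with the constant homotopy on $C_{s}\times\Delta^{1}$, using that $(-)\times\Delta^{1}$ preserves the sectionwise pushout. The only difference is that you spell out the compatibility checks on $A_{s}$ and $A_{s}\times\Delta^{1}$ that the paper leaves implicit in the phrase ``together determine a unique map.''
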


\begin{proof}
  The composites
  \begin{equation*}
    \begin{aligned}
      &B_{s} \xrightarrow{\theta} FA_{s+r} \xrightarrow{\alpha_{\ast}} FC_{s+r},  \\
      &C_{s} \xrightarrow{\sigma} C_{s+r} \xrightarrow{\eta} FC_{s+r}
    \end{aligned}
    \end{equation*}
      together determine a unique map $\theta': D_{s} \to FC_{s+r}$.

      The homotopy $i_{\ast}\cdot \theta \simeq \eta \cdot \sigma$ is defined by a map
      \begin{equation*}
        h: B_{s} \times \Delta^{1} \to FB_{s+r}.
      \end{equation*}
      This homotopy restricts to a constant homotopy on $A_{s}$, 
  which means that the diagram
      \begin{equation*}
        \xymatrix{
          A_{s} \times \Delta^{1} \ar[r]^-{pr} \ar[d]_{i \times \Delta^{1}} & A_{s} \ar[r]^{\sigma} & A_{s+r} \ar[r]^{\eta}
          & FA_{s+r} \ar[d]^{i_{\ast}} \\
          B_{s} \times \Delta^{1} \ar[rrr]_{h} &&& FB_{s+r}
        }
      \end{equation*}
      commutes, where $pr$ is a projection.

      The diagram
      \begin{equation*}
        \xymatrix{
          A_{s} \times \Delta^{1} \ar[r]^{\alpha \times \Delta^{1}} \ar[d]_{i \times \Delta^{1}}
          & C_{s} \times \Delta^{1} \ar[d]^{i_{\ast} \times \Delta^{1}} \\
          B_{s} \times \Delta^{1} \ar[r]_{\beta \times \Delta^{1}} & D_{s} \times \Delta^{1}
        }
      \end{equation*}
      is a pushout, and the composites
      \begin{equation*}
        \begin{aligned}
          &B_{s} \times \Delta^{1} \xrightarrow{h} FB_{s+r} \xrightarrow{\beta_{\ast}} FD_{s+r} \\
          &C_{s} \times \Delta^{1} \xrightarrow{pr} C_{s} \xrightarrow{\sigma} C_{s+r} \xrightarrow{\eta} FC_{s+r} \xrightarrow{i_{\ast}} FD_{s+r}
        \end{aligned}
      \end{equation*}
      together determine a homotopy
      \begin{equation*}
        h': D_{s} \times \Delta^{1} \to FD_{s+r}
      \end{equation*}
      from $i_{\ast} \cdot \theta'$ to $\eta \cdot \sigma$. The homotopy $h'$ restricts to the constant homotopy on $C_{s}$,
              by construction.
\end{proof}

Theorem \ref{th 15} says that the pullback of a map which is a sectionwise fibration and an $r$-equivalence is a sectionwise fibration and a $2r$-equivalence. The ``dual'' statement, namely that a pushout of a map which is a cofibration and an $r$-equivalence is a cofibration and a $2r$-equivalence, has not been proved for any relevant class of cofibrations.

The practical examples of cofibrations which are $r$-weak equivalences are cofibrations which admit $r$-interleavings. These arise from stability results, and Lemma \ref{lem 17} says that the class of cofibrations which admit $r$-interleavings is closed under pushout.

Thus, if one has a pushout diagram such as (\ref{eq 5}) for which the cofibration $i$ admits an $r$-interleaving, then the induced map $i_{\ast}$ also admits an $r$-interleaving. The map $i_{\ast}$ induces $r$-isomorphisms $H_{k}(C) \to H_{k}(D)$ and $\pi_{0}C \to \pi_{0}D$.
\medskip

We have more general statements for homology and path components, as follows.

\begin{lemma}\label{lem 18}
  Supppose given a pushout diagram
  \begin{equation*}
    \xymatrix{
      A \ar[r]^{\alpha} \ar[d]_{i} & C \ar[d] \\
      B \ar[r]_{\beta} & D
    }
  \end{equation*}
  of systems in which $i$ is a cofibration. Then the following hold:
  \begin{itemize}
  \item[1)]
    If the map $\pi_{0}A \to \pi_{0}B$ is an $r$-isomorphism, then the $\pi_{0}C \to \pi_{0}D$ is an $r$-isomorphism.
  \item[2)] If the maps $H_{k}(A) \to H_{k}(B)$ are $r$-isomorphisms for $k \geq 0$ (arbitrary coefficients, then the map $H_{k}(C) \to H_{k}(D)$ is a $2r$-isomorphism, for $k \geq 0$.
\end{itemize}
  \end{lemma}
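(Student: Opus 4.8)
The plan is to treat the two statements separately, using that colimits of systems are formed sectionwise and recalling that ``$r$-isomorphism'' means ``$r$-monomorphism and $r$-epimorphism''. Throughout I write $i': C \to D$ for the induced right-hand vertical map.

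For statement 1) I would first note that $\pi_{0}$ preserves pushouts, being left adjoint to the inclusion of sets (as discrete simplicial sets) into simplicial sets. Applying $\pi_{0}$ sectionwise therefore yields, for each $s$, a pushout of sets $\pi_{0}D_{s} = \pi_{0}B_{s} \sqcup_{\pi_{0}A_{s}} \pi_{0}C_{s}$, which I would analyze as the quotient of $\pi_{0}B_{s} \sqcup \pi_{0}C_{s}$ by the relation generated by $i(a) \sim \alpha(a)$. For the $r$-epimorphism property of $i'$, every class of $\pi_{0}D_{s}$ has a representative in $\pi_{0}C_{s}$ (already in the image) or in $\pi_{0}B_{s}$, and in the latter case the $r$-epimorphism property of $\pi_{0}A \to \pi_{0}B$ moves the representative into the image of $\alpha$ after a shift by $r$, hence into the image of $i'$. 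For the $r$-monomorphism property, two classes $[c],[c']$ identified in $\pi_{0}D_{s}$ are joined by a zig-zag whose steps are equalities in $\pi_{0}C_{s}$ together with equalities $i(a_{j}) = i(a_{j+1})$ in $\pi_{0}B_{s}$; the $r$-monomorphism property of $i$ turns each of the latter into $\sigma\alpha(a_{j}) = \sigma\alpha(a_{j+1})$ in $\pi_{0}C_{s+r}$, while the former are simply carried to $\pi_{0}C_{s+r}$ by $\sigma$. The essential observation that prevents the shift from accumulating is that \emph{every} step is resolved at the single common level $s+r$, so the equalities chain together there to give $\sigma[c]=\sigma[c']$.

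For statement 2) I would exploit that $i$ is a cofibration. Then $i'$ is also a cofibration and the pushout identifies cofibers $D/C \cong B/A$, so excision gives a natural isomorphism of systems $H_{k}(D,C) \cong H_{k}(B,A)$; since the degreewise sequence $0 \to \mathbb{Z}[A_{n}] \to \mathbb{Z}[B_{n}] \to \mathbb{Z}[B_{n}]/\mathbb{Z}[A_{n}] \to 0$ is split by freeness, the relative chain complexes agree on the nose and the long exact sequences of the pairs are available with arbitrary coefficients. The crux is the intermediate claim that if $i_{\ast}: H_{k}(A) \to H_{k}(B)$ is an $r$-isomorphism for all $k$, then $H_{k}(B,A)$ is \emph{$2r$-trivial}, meaning every structure map $\sigma: H_{k}(B_{s},A_{s}) \to H_{k}(B_{s+2r},A_{s+2r})$ vanishes. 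I would prove this from the long exact sequence of the pair: given $w \in H_{k}(B_{s},A_{s})$, its boundary $\partial w \in H_{k-1}(A_{s})$ satisfies $i_{\ast}(\partial w)=0$, so $r$-monomorphy kills it in $H_{k-1}(A_{s+r})$; hence the image of $w$ in $H_{k}(B_{s+r},A_{s+r})$ lies in the image of $j: H_{k}(B_{s+r}) \to H_{k}(B_{s+r},A_{s+r})$, say $j(u)$. Applying $r$-epimorphy to $u$ produces $v \in H_{k}(A_{s+2r})$ whose image under $i_{\ast}$ agrees with that of $u$ in $H_{k}(B_{s+2r})$, whence the image of $w$ in $H_{k}(B_{s+2r},A_{s+2r})$ equals $j(i_{\ast}v)=0$. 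Transporting this $2r$-triviality across the excision isomorphism to $H_{k}(D,C)$, a parallel chase in the long exact sequence of $(D,C)$ shows that $i': H_{k}(C) \to H_{k}(D)$ is a $2r$-epimorphism (the class $j'(y)$ dies after shifting, so $\sigma y$ lands in the image of $i'$) and a $2r$-monomorphism (a class killed by $i'$ is a boundary, hence is annihilated after shifting), i.e.\ a $2r$-isomorphism.

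The main obstacle I anticipate is the doubling of the parameter in statement 2), and locating exactly where it is forced. It enters precisely in the proof that $H_{k}(B,A)$ is $2r$-trivial: one shift of $r$ is consumed using $r$-monomorphy to place $w$ in the image of $j$, and a second shift of $r$ is consumed using $r$-epimorphy to recognize that class as coming from $A$, hence as zero in the relative group. This is the same ``two obstructions'' phenomenon responsible for the $r \to 2r$ passage in Theorem \ref{th 15}, and the contrast with statement 1), where the single level $s+r$ already suffices, is worth keeping in view while writing the argument.
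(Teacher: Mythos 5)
Your proposal is correct, and for statement 2) it is essentially the paper's own argument with the details filled in: the paper identifies $D/C$ with $B/A$, asserts via an ``element chase'' that the map $0 \to H_{k}(B/A)$ is a $2r$-isomorphism (which is precisely your ``$2r$-triviality'' of $H_{k}(B,A)$, with the shift of $r$ consumed once by $r$-monomorphy at the boundary and once by $r$-epimorphy to land in the image of $i_{\ast}$), and then runs the long exact sequence of the pair $(D,C)$ exactly as you do. Where you genuinely diverge is statement 1). The paper also begins by observing that $\pi_{0}$ preserves pushouts, but then delegates the set-level statement to its Lemma \ref{lem 19}: a pushout of systems of sets along an $r$-bijection is an $r$-bijection, proved by taking the epi-monic factorization $A \to Z \to B$, forming the two stacked pushouts, and checking that the pushout of the monomorphism stays an $r$-epimorphism while the pushout of the epimorphism (collapsing fibres) stays an $r$-monomorphism. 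You instead verify the set-level claim directly, by describing the pushout as $(\pi_{0}B_{s} \sqcup \pi_{0}C_{s})/\sim$ and chasing the generating zig-zag; your key observation that every link of the zig-zag resolves at the \emph{single} level $s+r$, so the shifts do not accumulate along the chain, is exactly the point that makes the direct argument work and is worth stating explicitly. The trade-off: your chase is self-contained and makes the non-accumulation of shifts visible, while the paper's factorization argument is shorter on the page and isolates a reusable lemma about pushouts of systems of sets; both are sound.
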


\begin{proof}
  Statement 1) follows from Lemma \ref{lem 19} below, since the path component functor preserves pushouts.

  For statement 2) there is a system of exact sequences
  \begin{equation*}
    \dots \to H_{k}(A) \to H_{k}(B) \to H_{k}(B/A) \xrightarrow{\partial} H_{k-1}(A) \to H_{k-1}(B) \to \dots
  \end{equation*}
  An element chase within this system shows that the map
  $0 \to H_{k}(B/A)$ is a $2r$-isomorphism for all $k \geq 0$. One uses the system of exact sequences
\begin{equation*}
  \dots \to H_{k}(C) \to H_{k}(D) \to H_{k}(B/A) \xrightarrow{\partial} H_{k-1}(C) \to H_{k-1}(DB) \to \dots
  \end{equation*}
 to show that the map $H_{k}(C) \to H_{k}(D)$ is a $2r$-isomorphism for all $k \geq 0$.
  \end{proof}

  \begin{lemma}\label{lem 19}
  Suppose that the diagram
  \begin{equation*}
    \xymatrix{
    A \ar[r]^{\alpha} \ar[d]_{f} & C \ar[d]^{f'} \\
    B \ar[r] & D
    }
  \end{equation*}
  is a pushout of systems of sets, and that $f$ is an $r$-bijection. Then $f'$ is an $r$-bijection.
\end{lemma}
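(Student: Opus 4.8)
The plan is to use that colimits of systems are formed sectionwise, so that at each parameter value $s$ the square is an ordinary pushout of sets $D_{s} = B_{s} \sqcup_{A_{s}} C_{s}$, and then to verify the two halves of the $r$-bijection condition for $f'$ separately. Throughout I write $\sigma$ for the structure maps of all four systems, and I recall that an $r$-bijection is an $r$-monomorphism (if $f'(c) = f'(c')$ in $D_{s}$ then $\sigma(c) = \sigma(c')$ in $C_{s+r}$) together with an $r$-epimorphism (each $d \in D_{s}$ satisfies $\sigma(d) = f'(c)$ for some $c \in C_{s+r}$).

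I would dispatch the $r$-epimorphism condition first, as it is routine. Any $d \in D_{s}$ is represented either by some $c_{0} \in C_{s}$, in which case $\sigma(d) = f'(\sigma(c_{0}))$ by naturality of $f'$, or by some $b_{0} \in B_{s}$. In the latter case I use that $f$ is an $r$-epimorphism to produce $a \in A_{s+r}$ with $\sigma(b_{0}) = f(a)$ in $B_{s+r}$; commutativity of the pushout square then gives
\[
\sigma(d) = \beta(\sigma(b_{0})) = \beta(f(a)) = f'(\alpha(a)),
\]
so that $\alpha(a) \in C_{s+r}$ is the required preimage.

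The $r$-monomorphism condition is the crux, and the main obstacle is to get explicit control of the equivalence relation defining the sectionwise pushout. My first step would be to record that, for $c, c' \in C_{s}$, one has $f'(c) = f'(c')$ in $D_{s}$ precisely when $c$ and $c'$ are joined by a zigzag through $A_{s}$; grouping each excursion into $B_{s}$ and back reduces this to a chain $c = c_{0} \approx c_{1} \approx \dots \approx c_{m} = c'$, where $c \approx c'$ means that there exist $a, a' \in A_{s}$ with $\alpha(a) = c$, $\alpha(a') = c'$ and $f(a) = f(a')$. The decisive step is then the analysis of a single link: if $c \approx c'$, then since $f$ is an $r$-monomorphism the equality $f(a) = f(a')$ forces $\sigma(a) = \sigma(a')$ in $A_{s+r}$, and applying $\alpha$ together with naturality gives
\[
\sigma(c) = \alpha(\sigma(a)) = \alpha(\sigma(a')) = \sigma(c')
\]
in $C_{s+r}$. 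Since all of these are genuine equalities in the single set $C_{s+r}$, chaining them along $c_{0}, \dots, c_{m}$ yields $\sigma(c) = \sigma(c')$ in $C_{s+r}$, which is exactly the $r$-monomorphism property; in particular no doubling of the parameter occurs, because every link is absorbed by the one shift $s \mapsto s+r$.

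The only genuinely delicate point is the reduction to the relation $\approx$, and I would justify it by picturing the generating pairs $(f(a), \alpha(a))$ of the pushout relation as the edges of a bipartite graph on $B_{s} \sqcup C_{s}$: any path between two vertices of $C_{s}$ alternates between $C_{s}$ and $B_{s}$, and each segment returning from $C_{s}$ to $C_{s}$ through a single vertex of $B_{s}$ realizes exactly one instance of $\approx$. Once this combinatorial bookkeeping is in place, all of the homotopical content is carried by the $r$-monomorphism hypothesis on $f$, and the argument closes immediately.
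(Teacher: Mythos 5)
Your proof is correct, but it is organized differently from the paper's. The paper factors $f$ as an epimorphism followed by a monomorphism, $A \xrightarrow{p} Z \xrightarrow{j} B$, stacks the two corresponding pushouts, and then argues that the pushout $j'$ of the monic part is a sectionwise monomorphism and an $r$-epimorphism, while the pushout $p'$ of the epi part is a sectionwise epimorphism and an $r$-monomorphism, so that the composite $f' = j'\cdot p'$ is an $r$-bijection. You instead work directly with the sectionwise pushout $D_{s} = B_{s}\sqcup_{A_{s}} C_{s}$ and analyze the generated equivalence relation by hand: the alternating-path reduction to single links $c \approx c'$ (witnessed by $a, a' \in A_{s}$ with $\alpha(a)=c$, $\alpha(a')=c'$, $f(a)=f(a')$) is exactly right, and your observation that each link is killed by the single shift $s \mapsto s+r$ --- so that transitivity of genuine equality in $C_{s+r}$ absorbs arbitrarily long chains with no doubling of the parameter --- is the real content of the lemma; it is also the step the paper leaves implicit inside its claim that $p'$ is an $r$-monomorphism, since the quotient defining $Z'$ requires the same zigzag bookkeeping. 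What the paper's route buys is modularity and brevity: the $r$-epimorphism and $r$-monomorphism halves are carried by separate pushout squares, and standard facts (pushouts of monos are monos, pushouts of epis are epis, in sets) do part of the work. What your route buys is self-containment and transparency: every identification in $D_{s}$ is accounted for explicitly, so nothing is hidden in an appeal to the structure of the quotient. Both your case analysis for the $r$-epimorphism condition and your chain argument for the $r$-monomorphism condition are complete and correct.
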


\begin{proof}
  The map $f$ has an epi-monic factorization
  \begin{equation*}
    A \xrightarrow{p} Z \xrightarrow{j} B
  \end{equation*}
  and there are corresponding pushout diagrams
  \begin{equation*}
    \xymatrix@R=12pt{
      A \ar[r] \ar[d]_{p} & C \ar[d]^{p'} \\
      Z \ar[d]_{j} \ar[r] & Z' \ar[d]^{j'} \\
      B \ar[r] & D
    }
    \end{equation*}

  The map $j'$ is a sectionwise monomorphism, and $j'$ is an $r$-epimorphism since $j$ is an $r$-epimorphism.

  The sectionwise epimorphism $p'$ is constructed by collapsing images of fibres of $p$ to points, and it follows that $p'$ is an $r$-monomorphism as well as a sectionwise epimorphism.

The conclusion follows: the composite $j' \cdot p'$ is an $r$-monomorphism and an $r$-epimorphism.
\end{proof}

\bibliographystyle{plain}
\bibliography{spt}

\end{document}